\tikzstyle{uStyle}=[shape = circle, minimum size = 20pt, inner sep =2.5pt, outer sep = 0pt, draw, fill=white]
\tikzstyle{myStyle}=[shape = circle, draw, fill=black, scale=0.5]
\newcommand{\aside}[1]{\marginnote{\scriptsize{#1}}[0cm]}
\newcommand{\aaside}[2]{\marginnote{\scriptsize{#1}}[#2]}
\newcommand\Emph[1]{\emph{#1}\aside{#1}}
\newcommand\EmphE[2]{\emph{#1}\aaside{#1}{#2}}
\newtheorem{lem}{Lemma}[section]
\newtheorem{conj}{Conjecture}
\newtheorem{thm}{Theorem}[section]
\newtheorem{claim}{Claim}
\newtheorem*{key-lem}{Key Lemma}
\def\diam{\textrm{diam}}
\def\max{\textrm{max}}
\title{Reconstruction and Edge Reconstruction of Triangle-free Graphs}
\author{Alexander Clifton\thanks{Discrete Mathematics Group, Institute for Basic Science, Daejeon, South Korea; \texttt{yoa@ibs.re.kr}} \and Xiaonan Liu\thanks{School of Mathematics, Georgia Institute of Technology, Atlanta, GA, USA; \texttt{xliu729@gatech.edu}} \and Reem Mahmoud\thanks{Department of Computer Science, Virginia Commonwealth University, Richmond, VA, USA; \texttt{mahmoudr@vcu.edu}} \and Abhinav Shantanam\thanks{Department of Mathematics, Simon Fraser University, Burnaby, BC, Canada; \texttt{ashantan@sfu.ca}}}
\date{\today}
\begin{document}

\maketitle

\begin{abstract}
The Reconstruction Conjecture due to Kelly and Ulam states that every graph with at least 3 vertices is uniquely determined by its multiset of subgraphs $\{G-v: v\in V(G)\}$. Let $diam(G)$ and $\kappa(G)$ denote the diameter and the connectivity of a graph $G$, respectively, and let $\mathcal{G}_2:=\{G: \diam(G)=2\}$ and $\mathcal{G}_3:=\{G:\diam(G)=\diam(\overline{G})=3\}$. It is known that the Reconstruction Conjecture is true if and only if it is true for every 2-connected graph in $\mathcal{G}_2\cup \mathcal{G}_3$. Balakumar and Monikandan showed that the Reconstruction Conjecture holds for every triangle-free graph $G$ in $\mathcal{G}_2\cup \mathcal{G}_3$ with $\kappa(G)=2$. Moreover, they asked whether the result still holds if $\kappa(G)\ge 3$. (If yes, the class of graphs critical for solving the Reconstruction Conjecture is restricted to 2-connected graphs in $\mathcal{G}_2\cup\mathcal{G}_3$ which contain triangles.) In this paper, we give a partial solution to their question by showing that the Reconstruction Conjecture holds for every triangle-free graph $G$ in $\mathcal{G}_3$ and every triangle-free graph $G$ in $\mathcal{G}_2$ with $\kappa(G)=3$. We also prove similar results about the Edge Reconstruction Conjecture.
\end{abstract}

\section{Introduction}
Throughout this paper, we use standard graph theory terminology and notation, as in \cite{We96}. Unless stated otherwise, assume $|V(G)|\ge3$ and $|E(G)|\ge4$ for every graph $G$. For vertices $u$ and $v$ in a graph $G$, we denote by \Emph{$d_G(u,v)$} the length of a shortest path from $u$ to $v$ in $G$. The \emph{diameter} of a graph $G$, denoted \Emph{$\diam(G)$}, is $\max_{u,v\in V(G)}d_G(u,v)$. We denote by $N_G(v)$ the neighborhood of a vertex $v$ in $G$. For a connected graph $G$, a set $S\subseteq V(G)$ is a \EmphE{cut set}{-1mm} if $G-S$ is disconnected; moreover, if $S=\{v\}$, then $v$ is a \EmphE{cut vertex}{-3mm}. The \Emph{connectivity} of $G$, denoted by \EmphE{$\kappa(G)$}{2.5mm}, is the size of its smallest cut set. For $k\geq2$, a graph $G$ is \EmphE{$k$-connected}{2mm} if its connectivity is at least $k$.

Graph Reconstruction is the study which explores whether a graph can be uniquely determined by its subgraphs. A \Emph{card} of a graph $G$ is a subgraph of $G$ obtained by deleting a single vertex; that is, $G-v$ for some $v\in V(G)$. The multiset $\mathcal{D}(G)$ of cards of $G$ is the \emph{deck}\aside{deck, $\mathcal{D}(G)$} of $G$, i.e., $\mathcal{D}(G):=\{G-v: v\in V(G)\}$. If $G$ is isomorphic to every graph $H$ with $\mathcal{D}(H)=\mathcal{D}(G)$, then $G$ is \EmphE{reconstructible}{-2mm}.
The most well-studied problem in the area of graph reconstruction is the \EmphE{Reconstruction Conjecture}{7mm} proposed by Ulam \cite{Ulam60} and Kelly \cite{Kel42, Kel57}. 
\begin{conj}[Reconstruction Conjecture]
For $n\ge 3$, every $n$-vertex graph is reconstructible, i.e., it is uniquely determined by its deck.
\end{conj}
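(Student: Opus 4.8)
The plan is to attack this statement through the counting methodology initiated by Kelly, while being candid that the Reconstruction Conjecture is among the oldest open problems in graph theory and that no complete proof is known; what follows is the standard line of attack rather than a finished argument. The central tool is \emph{Kelly's Lemma}: for any graph $H$ with $|V(H)|<|V(G)|$, the number $s(H,G)$ of subgraphs of $G$ isomorphic to $H$ is determined by the deck, via the identity
\[
s(H,G)=\frac{1}{|V(G)|-|V(H)|}\sum_{v\in V(G)}s(H,G-v),
\]
which holds because each copy of $H$ survives in exactly $|V(G)|-|V(H)|$ of the cards. First I would deploy this lemma to recover all ``small'' subgraph counts --- taking $H=K_2$ gives the number of edges, iterating over paths and small trees yields the degree sequence, and so on --- thereby establishing that a large battery of numerical invariants is reconstructible.

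The second step is to leverage known structural reconstructibility results to narrow the problem. Disconnected graphs, trees, and regular graphs are all known to be reconstructible, and by the reduction recalled in the abstract it suffices to treat $2$-connected graphs of diameter $2$, or of diameter $3$ with diameter-$3$ complement. I would therefore assume $G$ lies in this critical class and try to pin down its adjacency structure by combining the reconstructible invariants from the first step with local information read off each card. Concretely, for a vertex $v$ one would attempt to identify $G-v$ together with the way $v$ attaches, using degree constraints and counts of small configurations through $v$ to determine $N_G(v)$ up to the remaining ambiguity.

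The decisive difficulty --- and the reason the conjecture remains open --- is the final gluing step: even when every card $G-v$ and every reconstructible count is known, there is no general mechanism that forces a unique way to reattach the deleted vertices, because distinct graphs can in principle share a deck while differing only in a subtle global adjacency pattern that no finite list of subgraph counts detects. The hard part is thus precisely to rule out such hypothetical hypomorphic pairs in the critical diameter-$2$ and diameter-$3$ class; every known partial result (including those of the present paper) proceeds by imposing extra hypotheses --- triangle-freeness, a prescribed connectivity, or a bound on the diameter of the complement --- that supply enough rigidity to complete the reattachment. A full proof would require removing all such hypotheses simultaneously, and I would expect any genuine progress to come from a new reconstructible invariant strong enough to determine the global edge set directly, rather than from any refinement of the case analysis.
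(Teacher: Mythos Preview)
This statement is the Reconstruction Conjecture itself, which the paper presents as Conjecture~1 and does \emph{not} prove; the paper's contributions are the partial results in Theorems~\ref{diam2}--\ref{conn1diam3} for triangle-free graphs in $\mathcal{G}_2\cup\mathcal{G}_3$. There is therefore no ``paper's own proof'' to compare your proposal against.

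Your write-up is candid that it is not a proof but a survey of the standard attack, and on that level it is accurate: Kelly's Lemma does recover subgraph counts, the reductions to $2$-connected graphs in $\mathcal{G}_2\cup\mathcal{G}_3$ are correctly cited, and you correctly identify the reattachment step as the obstruction. But as a proof of the stated conjecture it has a genuine gap by your own admission --- the ``decisive difficulty'' you name is exactly the missing argument, and nothing in the proposal closes it. In short, the proposal is an honest outline of why the problem is hard, not a proof, and since the paper makes no claim to prove the conjecture either, the appropriate response is simply that no proof is expected here.
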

This conjecture has attracted a lot of attention. It has been confirmed for certain graph classes such as disconnected graphs, trees, regular graphs, perfect graphs, etc. Although much work has gone into proving the conjecture, it remains widely open even for sparse classes of graphs such as bipartite graphs, planar graphs and graphs of bounded maximum degree. For a detailed survey of results on the Reconstruction Conjecture and graph reconstruction in general, we refer the reader to \cite{La13}.

Yang proved \cite{Yang88} that the Reconstruction Conjecture is true if and only if it is true for every 2-connected graph. Let $\mathcal{G}_2:=\{G: \diam(G)=2\}$ and $\mathcal{G}_3:=\{G:\diam(G)=\diam(\overline{G})=3\}$\aside{$\mathcal{G}_2, \mathcal{G}_3$}. Gupta et al. \cite{GMP03} showed that the Reconstruction Conjecture is true if and only if it is true for every graph in $\mathcal{G}_2\cup\mathcal{G}_3$. Combining the above two results, Monikandan and Ramachandran \cite{MR09} showed that it suffices to consider $2$-connected graphs in $\mathcal{G}_2\cup\mathcal{G}_3$ to prove the Reconstruction Conjecture.

\begin{thm}[\cite{MR09}]
\label{thmA}
The Reconstruction Conjecture is true if and only if every $2$-connected graph in $\mathcal{G}_2\cup\mathcal{G}_3$ is reconstructible.
\end{thm}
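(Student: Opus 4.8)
If the Reconstruction Conjecture holds then every graph is reconstructible, so in particular every $2$-connected graph in $\mathcal G_2\cup\mathcal G_3$ is; thus only the converse has content. The plan is to merge two reductions: Yang's structural reduction to $2$-connected graphs, and the diameter-based reduction of Gupta, Mangipudi and Paul to the class $\mathcal G_2\cup\mathcal G_3$, while checking that the two are compatible.

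Assume that every $2$-connected graph in $\mathcal G_2\cup\mathcal G_3$ is reconstructible, and suppose for contradiction that not every graph is reconstructible; let $G$ be a non-reconstructible graph with the fewest vertices. By Yang's result --- in the sharper form, established by its proof, that a smallest non-reconstructible graph is $2$-connected --- $G$ is $2$-connected; in particular $G$ is connected and has an edge. I will also use two standard facts: the diameter of a graph is reconstructible, so $\diam(G)$ is recoverable from $\mathcal D(G)$; and $\mathcal D(\overline G)$ is recoverable from $\mathcal D(G)$ since $\overline G-v=\overline{G-v}$, whence $G$ is reconstructible if and only if $\overline G$ is. Consequently $\overline G$ is also a non-reconstructible graph on the same number of vertices, hence is also smallest, so $\overline G$ is $2$-connected too. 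Finally, recall the classical fact that $\diam(G)\ge 3$ implies $\diam(\overline G)\le 3$.

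I would then case on $\diam(G)$. If $\diam(G)=1$, then $G=K_n$, which is reconstructible --- a contradiction. If $\diam(G)=2$, then $G$ is a $2$-connected graph in $\mathcal G_2$, hence reconstructible by hypothesis --- a contradiction. So suppose $\diam(G)\ge 3$; then $\diam(\overline G)\le 3$, and since $G$ has an edge, $\overline G\ne K_n$, so $\diam(\overline G)\in\{2,3\}$. If $\diam(\overline G)=2$, then $\overline G$ is a $2$-connected graph in $\mathcal G_2$, so $\overline G$ is reconstructible by hypothesis, and therefore so is $G$ --- a contradiction. If $\diam(\overline G)=3$, then applying the classical fact to $\overline G$ gives $\diam(G)=\diam(\overline{\overline G})\le 3$, so $\diam(G)=3$ and $G\in\mathcal G_3$; as $G$ is $2$-connected, the hypothesis is again contradicted. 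Every case is impossible, so the Reconstruction Conjecture holds.

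The delicate point --- and the step I expect to be the main obstacle --- is the compatibility of the two reductions: the diameter reduction forces a passage from $G$ to $\overline G$ whenever $\diam(G)\ge 3$, yet complementation need not preserve $2$-connectedness (a $2$-connected graph in $\mathcal G_3$ can have a complement with a cut vertex). Above this is handled by observing that $\overline G$ inherits minimality, so Yang's reduction applies to $\overline G$ as well. If one prefers not to invoke that for $\overline G$, one can argue directly in the only relevant case, $\diam(\overline G)=2$: such a graph is connected, and were $v$ a cut vertex of $\overline G$ splitting $V\setminus\{v\}$ into nonempty $A$ and $B$ with no $\overline G$-edge between them, then each $a\in A$ and $b\in B$ would be non-adjacent in $\overline G$ and so (diameter $2$) have a common neighbour, necessarily $v$; hence $v$ would be adjacent in $\overline G$ to all other vertices, i.e.\ isolated in $G$, contradicting $\kappa(G)\ge 2$. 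Either route closes the gap, and what remains is routine checking of the diameter cases.
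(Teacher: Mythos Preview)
The paper does not supply a proof of Theorem~\ref{thmA}; it is quoted from Monikandan and Ramachandran \cite{MR09} as a known result, preceded by a one-sentence summary that the theorem is obtained by combining Yang's reduction to $2$-connected graphs with the diameter reduction of Gupta, Mangipudi and Paul. Your proposal reconstructs exactly this combination and is correct. The only nontrivial step is the one you flag: ensuring that when the diameter argument forces a pass to $\overline G$, one still has $2$-connectedness there. Your two resolutions --- inheriting minimality so that Yang applies to $\overline G$, or the direct argument that a cut vertex in a diameter-$2$ graph would be universal and hence isolated in $G$ --- are both valid, and the second is in fact how \cite{MR09} handles it. So your write-up matches the intended argument.
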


As a step towards proving the Reconstruction Conjecture,  Balakumar and Monikandan \cite{BM12} studied the graphs in Theorem~\ref{thmA} which are bipartite and those which are triangle-free. They proved the following:

\begin{thm}[\cite{BM12}]
\label{bipartite-thm}
If $G\in\mathcal{G}_2$ and is bipartite, or if $G\in\mathcal{G}_3$ and is 2-connected and bipartite, then $G$ is reconstructible.
\end{thm}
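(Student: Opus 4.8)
The plan is to treat the two cases of the statement separately, relying throughout on three standard facts: by Kelly's Lemma the order, the size, the degree sequence, and the number of copies of every proper subgraph are reconstructible from the deck; connectedness and the existence of a cut vertex are recognizable, so being $2$-connected is recognizable; and $G$ is reconstructible if and only if $\overline{G}$ is, since $\mathcal{D}(\overline{G})$ is determined by $\mathcal{D}(G)$ via $\overline{G}-v=\overline{G-v}$. For $G\in\mathcal{G}_2$ bipartite the argument is short: as $\diam(G)=2$, $G$ is connected, hence has a unique bipartition $(A,B)$, and any $a\in A$ and $b\in B$ lie at odd distance at most $2$ and so are adjacent; thus $G=K_{|A|,|B|}$ and $\overline{G}=K_{|A|}\cup K_{|B|}$ is disconnected. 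Since disconnected graphs are reconstructible, so are $\overline{G}$ and hence $G$.

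For $G\in\mathcal{G}_3$ that is $2$-connected and bipartite, write $(A,B)$ for its bipartition with $a:=|A|\le|B|=:b$, and let $H$ be any graph with $\mathcal{D}(H)=\mathcal{D}(G)$. The first step is recognizability. Since $G$ is $2$-connected no card is disconnected, so $H$ has no cut vertex; as $H$ is also connected, $H$ is $2$-connected, and it has $G$'s order and degree sequence. To see that $H$ is bipartite: $G$ has no odd cycle, so by Kelly's Lemma $H$ has no odd cycle of length $<n$; and $H$ has no spanning odd cycle, for such a cycle would force $n$ odd and hence $a\ne b$, but then deleting a vertex of $A$ from $G$ yields a card whose two colour classes differ in size by at least $2$, so that card has no spanning path, whence $H$ --- whose deck contains that card --- is not Hamiltonian. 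Thus $H$ is bipartite; and since each card $G-v$ is connected bipartite it has a unique bipartition, namely $(A\setminus v,B)$ or $(A,B\setminus v)$, so comparing colour-class sizes across the deck recovers $\{a,b\}$ and the part of each vertex. Hence $H$ is $2$-connected bipartite with the same part sizes as $G$, and we may speak of the part of each vertex of $H$.

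The second, and harder, step is to reconstruct the bipartite adjacency between $A$ and $B$. Knowing the bipartition of every card, the deck hands us --- up to relabelling of the surviving rows and columns --- the multiset of all row-deleted and all column-deleted submatrices of the $a\times b$ bipartite adjacency matrix $M$ of $G$, and the goal is to recover $M$ up to permutations of rows and of columns (and transposition when $a=b$). Here the diameter hypotheses are the essential lever: $\diam(G)=3$ means any two vertices in a part have a common neighbour, so every two rows of $M$ share a $1$ and every two columns share a $1$, while $M$ still contains a $0$; and $\diam(\overline{G})=3$ (with $\diam(\overline{G})\le 3$) forces an all-ones row and an all-ones column of $M$ --- a vertex of $A$ adjacent to all of $B$ and one of $B$ adjacent to all of $A$ --- and constrains the bipartite complement of $M$. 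I would then use Kelly's Lemma to reconstruct, for each pair of row-degrees, the distribution of pairwise intersection sizes of the corresponding rows (and likewise for columns and for small triples), and combine these invariants with the deleted submatrices --- deleting a lowest-degree row and watching how the column co-degrees change pins down that row's support --- so as to rebuild $M$ one line at a time, using the common-neighbour condition to eliminate the residual symmetry.

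The step I expect to be the genuine obstacle is this last one. Recognizability is essentially mechanical, but recovering the full bipartite adjacency matrix from its row- and column-deleted submatrices, each known only up to relabelling, is exactly where two non-isomorphic graphs could a priori coexist; closing that gap forces one to exploit the common-neighbour consequence of $\diam(G)=3$ and the structure imposed by $\diam(\overline{G})=3$, and to dispatch separately the degenerate regimes --- a part of size at most $2$, equal part sizes, or $G$ (near-)regular --- in which the Kelly counts are least informative.
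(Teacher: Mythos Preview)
This theorem is quoted from \cite{BM12}; the present paper does not supply its own proof, so there is nothing to compare against and I assess your proposal on its own merits.

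The $\mathcal{G}_2$ case is correct: a connected bipartite graph of diameter $2$ is complete bipartite, so $\overline G$ is disconnected and hence reconstructible, whence so is $G$.

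For the $\mathcal{G}_3$ case your recognizability step is sound, including the Hamiltonicity argument that rules out a spanning odd cycle in $H$. The gap is in weak reconstructibility. You correctly deduce from $\diam(\overline G)=3$ that there exist $u\in A$ complete to $B$ and $v\in B$ complete to $A$---but then you set this aside and propose to rebuild the entire bipartite adjacency matrix from its line-deleted submatrices, a programme you explicitly do not carry out and which, as described (``rebuild $M$ one line at a time''), is not a proof. In fact your own observation already finishes the argument. The maximum degree of $G$ equals $b$, realised by $u$; moreover any vertex $w$ with $d_G(w)=b$ is complete to the opposite part (if $a<b$ then necessarily $w\in A$; if $a=b$ then $w$ may lie in either part, but is still complete to the other). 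Pick any card $G-w$ with $d_G(w)=b$, identifiable via Lemma~\ref{deletedvertex}. Since $G$ is $2$-connected this card is connected and bipartite, and its two colour classes have sizes $\{a-1,b\}$ or, when $a=b$ and $w\in B$, sizes $\{a,b-1\}$; in either case they are unequal and the larger one is exactly $N_G(w)$. Adding a new vertex complete to the larger part therefore recovers $G$, and the same rule applied to any $H$ with $\mathcal D(H)=\mathcal D(G)$ yields an isomorphic graph. So the missing idea is not a new technique but simply to exploit the universal vertex you already found, rather than attempting a general matrix reconstruction.
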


\begin{thm}[\cite{BM12}]
\label{tfree-thm}
If $G\in\mathcal{G}_2\cup\mathcal{G}_3$ and is triangle-free with $\kappa(G)=2$, then $G$ is reconstructible.
\end{thm}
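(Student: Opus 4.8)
The plan is to establish a structure theorem describing triangle-free graphs in $\mathcal{G}_2\cup\mathcal{G}_3$ that possess a $2$-cut, and then to reconstruct such a graph from its deck essentially by reading off a few numerical parameters, with the bipartite instances handled by Theorem~\ref{bipartite-thm}.

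\emph{Preliminaries.} By Kelly's Lemma, for every graph $H$ with $|V(H)|<|V(G)|$ the number of copies (induced or not) of $H$ in $G$ is reconstructible; hence $|V(G)|$, $|E(G)|$, the degree sequence, the number of triangles, and the numbers of short cycles and of $K_{2,t}$'s are reconstructible, so triangle-freeness is recognizable. Moreover $\mathcal{D}(\overline G)$ is determined by $\mathcal{D}(G)$, so $G$ is reconstructible iff $\overline G$ is; as disconnected graphs are reconstructible (Kelly), so are their complements, and we may assume $G$ and $\overline G$ are connected. I would also record --- using the standard arguments for recognizing connectivity and the diameter from the deck --- that any $H$ with $\mathcal{D}(H)=\mathcal{D}(G)$ is again triangle-free, $2$-connected, and in the same diameter class; making this recognizability precise is itself a technical point, but it is the sort of thing the standard toolkit handles for these classes.

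\emph{Structure across a $2$-cut.} Fix a minimum cut $S=\{x,y\}$ and let $C_1,\dots,C_k$ ($k\ge 2$) be the components of $G-S$. Triangle-freeness makes $N_G(x)$ and $N_G(y)$ independent (with $N_G(x)\cap N_G(y)=\emptyset$ when $xy\in E$), and $\kappa(G)=2$ forces each $C_i$ to meet both $N_G(x)$ and $N_G(y)$. When $G\in\mathcal{G}_2$ this essentially pins $G$ down: a short argument shows $xy\notin E$; every vertex outside $S$ is adjacent to $x$ or to $y$; the vertices adjacent to exactly one of $x,y$ all lie in one component $C_1$; and the common-neighbor (distance-$2$) condition forces $C_1$ to be a complete bipartite graph $K_{a,b}$ whose sides $P,Q$ are precisely the ``$x$-only'' and ``$y$-only'' vertices, while every other component is a single vertex adjacent to both $x$ and $y$. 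Hence $G$ is either $K_{2,c}$ (bipartite, handled by Theorem~\ref{bipartite-thm}) or the graph $G(a,b,c)$ obtained from $K_{a,b}$ (with sides $P,Q$, $a,b\ge1$) by adding a vertex $x$ adjacent to all of $P$, a vertex $y$ adjacent to all of $Q$ ($x\not\sim y$), and $c\ge1$ further vertices each adjacent to both $x$ and $y$; the latter graphs are non-bipartite and determined up to isomorphism by the unordered pair $\{a,b\}$ together with $c$. The case $G\in\mathcal{G}_3$ is less rigid --- the distance-$2$ condition weakens to a distance-$3$ condition, so the components may have ``depth two'' and $xy$ may be an edge --- but I expect a similar, longer structure theorem in which the isomorphism type of $G$ is again governed by a bounded list of parameters describing how a few bipartite-like gadgets attach to $\{x,y\}$.

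\emph{Reconstruction, and the main obstacle.} Given the structure theorem, reconstruction becomes mostly arithmetic: from $\mathcal{D}(G)$ one reads off $n$, $m$, and the degree sequence, which in the $\mathcal{G}_2$ case determine $(a,b,c)$ (from the multiplicities of the degrees $b+1$, $a+1$, $2$, $a+c$, $b+c$) outside finitely many small or degenerate configurations; the exceptional cases are settled by hand or by counting a suitable subgraph (an induced $C_4$, $C_5$, or $K_{2,t}$) via Kelly's Lemma, and matching parameters forces $H\cong G$. The genuinely hard part, as I see it, is twofold: first, ensuring --- via the recognizability claims of the Preliminaries --- that every $H$ with $\mathcal{D}(H)=\mathcal{D}(G)$ is of the form covered by the structure theorem, so that the argument cannot be derailed by a triangle-free $2$-connected graph of a different diameter or a different cut structure masquerading with the same deck; and second, pinning down the $\mathcal{G}_3$ structure theorem, where the extra ``depth'' permits more gadget-attachment patterns and forces a more delicate case analysis ($xy\in E$ or not, the number and sizes of the components, which gadgets are degenerate), a few branches of which will again likely be closed only by an explicit subgraph count or by an appeal to Theorem~\ref{bipartite-thm}.
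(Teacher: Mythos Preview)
This theorem is not proved in the present paper; it is quoted from Balakumar and Monikandan \cite{BM12} as background, and the paper builds on it (extending to $\kappa(G)\ge 3$ in Theorems~\ref{diam2} and~\ref{diam3}) rather than reproving it. So there is no proof here against which to compare your proposal.

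As for the proposal itself, it is a sketch rather than a proof. The $\mathcal{G}_2$ structure you outline is plausible and in the spirit of the ``class'' bookkeeping the paper uses at connectivity~$3$, but several steps are asserted without verification (for instance, that the nontrivial component is \emph{complete} bipartite rather than merely bipartite, and that the parameters $(a,b,c)$ are unambiguously recoverable from the degree sequence in all cases). The real gap is the $\mathcal{G}_3$ case: you write only that you ``expect a similar, longer structure theorem'' and defer the analysis, yet this is where the content lies, since $\diam(G)=3$ permits depth-two components, the edge $xy$ may be present, and the hypothesis $\diam(\overline G)=3$ must be used in an essential way. Without that structure theorem actually stated and proved, the $\mathcal{G}_3$ half of the claim is unaddressed. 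If you want to complete this, consult \cite{BM12} directly; the techniques in the present paper's proofs of Theorems~\ref{diam2} and~\ref{diam3} show how the component-class analysis goes at higher connectivity and would serve as a template.
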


Note that Theorem~\ref{bipartite-thm} restricts the graphs in Theorem~\ref{thmA} to those containing odd cycles. Furthermore, Theorem~\ref{tfree-thm} gives partial results on the graphs in Theorem~\ref{thmA} which are triangle-free. Balakumar and Monikandan asked whether the class of graphs in Theorem~\ref{tfree-thm} could be extended to those with connectivity at least 3. They remarked that a positive answer to their question would restrict the graphs in Theorem~\ref{thmA} to those containing triangles. Furthermore, narrowing down the classes of graphs critical for proving the Reconstruction Conjecture makes it easier to search for a counterexample, if any. As a partial solution to their question, we prove the following two results.

\begin{thm}\label{diam2}
If $G\in \mathcal{G}_2$ and is triangle-free with $\kappa(G)=3$, then $G$ is reconstructible.
\end{thm}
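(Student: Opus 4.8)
The plan is to combine standard reconstruction tools with a structural analysis of triangle-free diameter-$2$ graphs of connectivity $3$. By Kelly's Lemma, $n=|V(G)|$, $|E(G)|$, the degree sequence, the number of triangles, the number of copies of $C_5$, and whether $G$ is connected are all reconstructible from $\mathcal{D}(G)$. A triangle-free graph of diameter $2$ has girth $4$ or $5$, and contains an odd cycle if and only if it contains a copy of $C_5$; so if $G$ is bipartite we are done by Theorem~\ref{bipartite-thm}, and we may assume $G$ is non-bipartite. Then $G$ contains a $C_5$, hence every $H$ with $\mathcal{D}(H)=\mathcal{D}(G)$ is connected, triangle-free, non-bipartite, and has $G$'s degree sequence. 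If $\diam(H)\neq 2$, then $H\in\mathcal{G}_3$ or $H\notin\mathcal{G}_2\cup\mathcal{G}_3$; in the former case $H$ is reconstructible by the companion result of this paper on triangle-free graphs in $\mathcal{G}_3$, and in the latter by the reduction of Gupta et al.~\cite{GMP03}; either way $G\cong H$, contradicting $\diam(G)=2$. So $\diam(H)=2$. Likewise $\kappa(H)=1$ would make $H$ a star, hence bipartite, a contradiction; and $\kappa(H)=2$ would make $H$ reconstructible by Theorem~\ref{tfree-thm}, forcing $G\cong H$ and contradicting $\kappa(G)=3$; so $\kappa(H)\geq 3$. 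It therefore suffices to prove the following Key Lemma: if $G_1$ is triangle-free with $\diam(G_1)=2$ and $\kappa(G_1)=3$, if $G_2$ is triangle-free with $\diam(G_2)=2$ and $\kappa(G_2)\geq 3$, and if $G_1-v\cong G_2-w$ with $\deg_{G_1}(v)=\deg_{G_2}(w)$, then $G_1\cong G_2$. Applying this to a common card of $G$ and $H$---whose restored degree $|E(G)|-|E(\text{card})|$ is reconstructible---then yields $G\cong H$.

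To prove the Key Lemma, put $C:=G_1-v=G_2-w$; since each $G_i$ is $3$-connected, $C$ is $2$-connected. Let $A:=N_{G_1}(v)$ and $B:=N_{G_2}(w)$, both $d$-element subsets of $V(C)$ where $d=\deg_{G_1}(v)$. The hypotheses on $G_1$ translate into strong constraints on $A$ (and symmetrically on $B$): (i) $A$ is independent in $C$, since $G_1$ is triangle-free; (ii) $A$ is a dominating set of $C$, so that $v$ lies within distance $2$ of every vertex; (iii) every pair of vertices at distance at least $3$ in $C$ is contained in $A$, because in $G_1$ such a pair can only be joined by a path of length $2$ through $v$; and (iv) no $2$-subset $\{x,y\}\subseteq V(C)$ leaves a component of $C-x-y$ disjoint from $A$, as otherwise $\{x,y\}$ would be a $2$-cut of $G_1$. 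The goal is to show that any two $d$-sets $A,B$ satisfying (i)--(iv) are equivalent under $\operatorname{Aut}(C)$, or at least that $C$ together with a new vertex joined to $A$ is isomorphic to $C$ together with a new vertex joined to $B$. To get a workable description of this class, I would fix a minimum cut $S$ of $G_1$ with $|S|=3$ and examine the components $C_1,\dots,C_k$ (with $k\geq 2$) of $G_1-S$: every vertex of $G_1-S$ has a neighbor in $S$, and every pair of vertices from distinct $C_i$ has a common neighbor in $S$, so triangle-freeness makes $S$ nearly independent and forces the ``signatures'' $N_{G_1}(u)\cap S$ of vertices in distinct components to pairwise intersect. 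As in the $\kappa=2$ analysis underlying Theorem~\ref{tfree-thm}, this leaves only a short list of possible global shapes for $G_1$, and transporting such a description across the isomorphism $G_1-v\cong G_2-w$ should match the roles played by $A$ and $B$.

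The main obstacle is precisely this rigidity step: ruling out that the card $C$ can be completed to two non-isomorphic members of the class. It is delicate for two reasons. First, when the reinserted vertex has small degree---for instance $\deg_{G_1}(v)=\delta(G_1)=3$---conditions (i)--(iii) need not determine $A$ on their own, so one must invoke the exact connectivity and the analysis of a $3$-cut, which interacts subtly with triangle-freeness. Second, the structural classification forces a case split on the number $k$ of components and on the signature pattern into $S$: ``star'' patterns in which all signatures share a fixed vertex of $S$; the ``triangle'' pattern $\{x,y\},\{y,z\},\{x,z\}$ with $S$ independent; and small sporadic configurations. Each case requires its own verification that the completion is forced. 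I expect highly symmetric small graphs such as the Petersen graph to be the delicate sporadic cases that must be checked by hand, and keeping the diameter equal to $2$---rather than letting it grow to $3$---throughout every re-attachment to be the most error-prone bookkeeping.
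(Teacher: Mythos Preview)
Your proposal is a plan rather than a proof, and the central gap is exactly the one you flag yourself: the ``Key Lemma'' is never established. Worse, the Key Lemma is stronger than what is required, and this extra strength is what makes it hard. You are trying to show that a \emph{single} shared card $C=G_1-v\cong G_2-w$ (together with the reinserted degree) forces $G_1\cong G_2$; but reconstruction gives you the entire deck, not just one card. The paper exploits this freedom by selecting a \emph{specific} card---namely $H=G-x_1$ where $x_1$ belongs to a minimum $3$-cut $S=\{x_1,x_2,x_3\}$---so that $\{x_2,x_3\}$ is visibly a $2$-cut of $H$. From there, triangle-freeness and $\diam(G)=2$ force $G-S$ to have a unique nontrivial component $C_1$ with $C_1(\emptyset)=C_1(S)=\emptyset$, and the neighbors of $x_1$ in $C_1$ are pinned down by a short case analysis on which of the sets $B_{12},B_{13},B_{23}$ are empty. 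No rigidity statement about arbitrary cards is needed, and nothing like a Petersen-style sporadic check arises.

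There is also a misstep in your reduction to the class. You invoke ``the reduction of Gupta et al.'' to conclude that any $H\notin\mathcal{G}_2\cup\mathcal{G}_3$ is reconstructible; that is not what their result says---it only asserts that the Reconstruction Conjecture is \emph{equivalent} to its restriction to $\mathcal{G}_2\cup\mathcal{G}_3$. The correct (and much shorter) argument is the one the paper uses: $\mathcal{G}_2$ is a recognizable class and $\kappa$ is reconstructible, so any $H$ with $\mathcal{D}(H)=\mathcal{D}(G)$ is automatically triangle-free, in $\mathcal{G}_2$, and has $\kappa(H)=3$. Your detours through Theorem~\ref{tfree-thm} and the $\mathcal{G}_3$ companion result to deduce $\diam(H)=2$ and $\kappa(H)\ge 3$ are valid but unnecessary, and in any case they do not close the main gap above.
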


\begin{thm}
\label{diam3}
If $G\in\mathcal{G}_3$ and is triangle-free with $\kappa(G)\geq3$, then $G$ is reconstructible. 
\end{thm}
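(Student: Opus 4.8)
The plan is to derive Theorem~\ref{diam3} from Theorem~\ref{bipartite-thm} by first establishing the structural fact that \emph{every triangle-free graph in $\mathcal{G}_3$ is bipartite}. Granting this, a triangle-free $G\in\mathcal{G}_3$ with $\kappa(G)\ge 3$ is a $2$-connected bipartite graph in $\mathcal{G}_3$, so the second clause of Theorem~\ref{bipartite-thm} immediately yields that $G$ is reconstructible. Note that every part of the hypothesis gets used: $\diam(\overline{G})=3$ will force the bipartition, $\diam(G)=3$ is what places $G$ in $\mathcal{G}_3$ so that Theorem~\ref{bipartite-thm} applies, and $\kappa(G)\ge 3$ is used only to supply the $2$-connectedness it requires.

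For the structural fact, let $G\in\mathcal{G}_3$ be triangle-free. Since $\diam(\overline{G})=3$, I would fix vertices $x,y$ with $d_{\overline{G}}(x,y)=3$. Then $xy\notin E(\overline{G})$, so $xy\in E(G)$; and $x,y$ have no common neighbour in $\overline{G}$, which says exactly that every $z\in V(G)\setminus\{x,y\}$ is adjacent in $G$ to $x$ or to $y$. Hence $N_G(x)\cup N_G(y)=V(G)$, i.e.\ $xy$ is a dominating edge of $G$. Triangle-freeness now does the rest: $N_G(x)$ and $N_G(y)$ are independent, and they are disjoint since a common neighbour of the adjacent pair $x,y$ would create a triangle. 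Setting $A:=N_G(x)\setminus\{y\}$ and $B:=N_G(y)\setminus\{x\}$ gives a partition $V(G)=\{x,y\}\sqcup A\sqcup B$ in which $A$ and $B$ are independent, $x$ has no neighbour in $B$, and $y$ has no neighbour in $A$; therefore every edge of $G$ joins $\{x\}\cup B$ to $\{y\}\cup A$, so $(\{x\}\cup B,\{y\}\cup A)$ is a bipartition and $G$ is bipartite.

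The whole argument rests on the single observation that a pair of vertices at distance $3$ in $\overline{G}$ forms a dominating edge of $G$; once that is in hand, the bipartition is forced by a one-line case check, so I do not expect a real obstacle. The only place more work could be needed is if one wished to avoid citing Theorem~\ref{bipartite-thm}: the same analysis exhibits $G$ as the edge $xy$ together with an arbitrary bipartite ``link'' between $A$ and $B$ (with $x$ joined to all of $A$ and $y$ to all of $B$), so a direct proof would instead have to locate the cards $G-x$ and $G-y$ in the deck and reconstruct this link via Kelly-type subgraph counts — feasible, but more laborious than the reduction above, which is why I would take the reduction.
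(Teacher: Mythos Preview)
Your proposal is correct, and its endgame coincides with the paper's: both arguments show that $G$ must be bipartite and then invoke Theorem~\ref{bipartite-thm}. Where you diverge is in \emph{how} bipartiteness is established. The paper first fixes a minimum cut set $S$ and the vertices $u,v$ with $d_{\overline{G}}(u,v)=3$, then splits into two cases according to whether $S$ is independent or not; each case requires a separate structural analysis of the components of $G-S$ before bipartiteness emerges. Your argument bypasses the cut set entirely: the single observation that a pair at distance~$3$ in $\overline{G}$ is a dominating edge of $G$, combined with triangle-freeness, yields the bipartition $(N_G(x),N_G(y))$ in one stroke. This is strictly more economical---in particular it shows that \emph{every} triangle-free graph in $\mathcal{G}_3$ is bipartite, with no connectivity hypothesis needed for that structural fact (connectivity enters only through Theorem~\ref{bipartite-thm}). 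The paper's case analysis does recover finer information about how the bipartition interacts with $S$, but none of that extra structure is actually used once Theorem~\ref{bipartite-thm} is cited, so your streamlined route loses nothing.
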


Observe that this leaves only the open case of every triangle-free graph $G\in\mathcal{G}_2$ with $\kappa(G)\geq4$.

\medskip

An edge-focused variant of the Reconstruction Conjecture was first proposed by Harary \cite{Har64}. An \emph{edge-card}\aaside{edge-card/deck $\mathcal{E}\mathcal{D}(G)$}{-3mm} of a graph $G$ is $G-e$ for some $e\in E(G)$, and the \emph{edge-deck} of $G$ is the multiset $\mathcal{E}\mathcal{D}(G):=\{G-e: e\in E(G)\}$. A graph $G$ is \EmphE{edge-reconstructible}{3mm} if $G$ is isomorphic to every graph $H$ with $\mathcal{E}\mathcal{D}(H)=\mathcal{E}\mathcal{D}(G)$.
Harary \cite{Har64} proposed the \EmphE{Edge Reconstruction Conjecture}{12mm} which states the following.
\begin{conj}[Edge Reconstruction Conjecture]
Every graph with at least $4$ edges is edge-reconstructible, i.e., it is uniquely determined by its edge-deck.
\end{conj}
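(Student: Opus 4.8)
The Edge Reconstruction Conjecture in full generality is a long-standing open problem, so the realistic plan is not to settle it outright but to reduce it, exactly as Theorem~\ref{thmA} does for vertex reconstruction, to a small family of structurally constrained graphs and then to dispatch the remaining dense cases by counting. First I would record the edge analogue of Kelly's Lemma: for any graph $F$ with $|E(F)|<|E(G)|$, the number $s(F,G)$ of subgraphs of $G$ isomorphic to $F$ is determined by the edge-deck $\mathcal{E}\mathcal{D}(G)$, since summing over cards gives $\sum_{e\in E(G)} s(F,G-e)=(|E(G)|-|E(F)|)\,s(F,G)$, and the factor on the right is nonzero. This makes edge-reconstructible every invariant expressible through such subgraph counts — the number of edges, the degree sequence, the number of triangles, the girth, and so on — which is the workhorse for everything that follows.

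Second, I would invoke the classical density thresholds. Lov\'asz's argument shows that a graph on $n$ vertices with $m>\tfrac12\binom{n}{2}$ edges is edge-reconstructible, and M\"uller's sharpening, together with the Nash-Williams permanent technique, guarantees edge-reconstructibility whenever $m\ge\log_2 n!$. Since complementation combined with these bounds handles all sufficiently dense graphs, the conjecture reduces to the sparse regime; in particular the triangle-free graphs of interest lie well inside the range where no counting bound applies, so the entire burden shifts onto structure.

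Third — and this is where the paper's hypotheses enter — I would mirror the vertex-reconstruction reduction. One argues that the edge version is critical only on graphs in $\mathcal{G}_2$ or $\mathcal{G}_3$, and then attacks the triangle-free members of these classes. The strategy is to recover from the edge-deck enough invariants (degree sequence, local adjacency around high-degree vertices, the diameter, the triangle-free profile) to pin down the neighborhood structure, and to use the high connectivity $\kappa(G)\ge 3$ to ensure that deleting a single edge neither disconnects $G$ nor changes its shortest-path metric. A putative edge-reconstruction $H$ must then share $G$'s distance function and hence, via the diameter constraints defining $\mathcal{G}_2$ and $\mathcal{G}_3$, its adjacency.

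The hard part will be the sparse structured case: the whole difficulty of the conjecture lives precisely where the counting bounds fail, and there is no general method there. Concretely, the obstacle is to convert the reconstructible numerical and metric invariants into an explicit isomorphism between $G$ and any $H$ sharing its edge-deck, while respecting the triangle-free, bounded-diameter, highly connected constraints simultaneously. It is this final rigidity step — not the reductions and not the counting — that carries the real content, and it is the reason the conjecture remains open in general.
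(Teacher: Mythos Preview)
The statement you were given is the Edge Reconstruction Conjecture itself, and the paper does \emph{not} prove it: it is stated there as an open conjecture, and the paper only establishes the special cases in Theorems~\ref{edgecon_diam2} and~\ref{edgecon_diam3}. So there is no ``paper's own proof'' to compare against, and your proposal is accordingly not a proof but a strategy outline---one that, as you yourself note in the last paragraph, leaves the essential difficulty untouched.

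Beyond that, your outline contains a concrete gap. In your third step you write ``One argues that the edge version is critical only on graphs in $\mathcal{G}_2$ or $\mathcal{G}_3$,'' treating this as a routine analogue of Theorem~\ref{thmA}. The paper explicitly warns that no such edge-reconstruction reduction is known, and explains why the vertex-reconstruction argument does not transfer: the proof of the $\mathcal{G}_2\cup\mathcal{G}_3$ reduction for decks uses that $\mathcal{D}(\overline{G})$ is determined by $\mathcal{D}(G)$, whereas the corresponding statement for edge-decks is false. So the reduction you invoke is itself an open problem, not a lemma you can cite. A second, smaller issue: your claim that $\kappa(G)\ge 3$ ensures ``deleting a single edge \ldots\ [does not] change its shortest-path metric'' is incorrect---edge deletion can increase distances regardless of connectivity (indeed, in a triangle-free graph $d_{G-uv}(u,v)\ge 3$ always), and the paper's proof of Theorem~\ref{edgecon_diam2} exploits exactly these distance changes rather than their absence.
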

Greenwell \cite{Gre71} established a connection between the Reconstruction Conjecture and the Edge Reconstruction Conjecture by showing that the deck of $G$ can be recovered from its edge-deck.
\begin{thm}[\cite{Gre71}]
\label{greenwell}
If $G$ has at least $4$ edges and no isolated vertices, then $\mathcal{D}(G)$ is uniquely determined by  $\mathcal{E}\mathcal{D}(G)$. 
\end{thm}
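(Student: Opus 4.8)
\medskip
\noindent\textbf{Proof strategy.}
The plan is to describe an explicit procedure that recovers $\mathcal{D}(G)$ from $\mathcal{E}\mathcal{D}(G)$. Several parameters are immediate: deleting an edge deletes no vertex, so every edge-card has vertex set $V(G)$ and $n:=|V(G)|$ is read off any edge-card, while $m:=|E(G)|$ is just the number of edge-cards; since $G$ has no isolated vertex, $\delta:=\delta(G)\ge1$. From each card $G-e$ we may in turn form its vertex-deck, so the combined multiset
\[
\mathcal{N}\;:=\;\biguplus_{e\in E(G)}\mathcal{D}(G-e)
\]
is computable from $\mathcal{E}\mathcal{D}(G)$.

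The first real step is a bookkeeping identity for $\mathcal{N}$. For an edge $e=xy$, the vertex-card $(G-e)-w$ equals $G-x$ when $w=x$, equals $G-y$ when $w=y$, and equals $(G-w)-e$ when $w\notin\{x,y\}$ (in the last case $e$ remains an edge of $G-w$). Summing over all $e$ and $w$, and observing that for a fixed $w$ the edges of $G$ avoiding $w$ are exactly the edges of $G-w$, this rearranges into
\[
\mathcal{N}\;=\;\mathcal{N}_1\uplus\mathcal{N}_2,
\]
where $\mathcal{N}_1:=\biguplus_{v\in V(G)}\deg_G(v)\cdot\{G-v\}$ and $\mathcal{N}_2:=\biguplus_{w\in V(G)}\mathcal{E}\mathcal{D}(G-w)$. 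The key feature is that the two pieces sit at different levels with respect to the number of edges: a copy of $G-v$ in $\mathcal{N}_1$ has $m-\deg_G(v)$ edges, so $\mathcal{N}_1$ reaches at most $m-\delta$ edges, whereas every graph of $\mathcal{N}_2$ is an edge-card of some $G-w$ and so has at most $m-\delta-1$ edges.

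The remaining step is to peel $\mathcal{N}_1$ off one degree at a time. The graphs of $\mathcal{N}$ attaining the maximum edge count $M$ lie entirely in $\mathcal{N}_1$ and come from the minimum-degree vertices, so $\delta=m-M$ and these graphs form $\biguplus_{v:\,\deg_G(v)=\delta}\delta\cdot\{G-v\}$; dividing all multiplicities by $\delta$ yields $\{G-v:\deg_G(v)=\delta\}$. Now induct on $k\ge\delta$: supposing $\{G-v:\deg_G(v)\le k\}$ has been found, the knowledge of $m$ lets us read off the sub-multiset $\{G-w:\deg_G(w)=k\}$ and hence $\biguplus_{w:\,\deg_G(w)=k}\mathcal{E}\mathcal{D}(G-w)$, which is precisely the level-$(m-k-1)$ part of $\mathcal{N}_2$; meanwhile the level-$(m-k-1)$ part of $\mathcal{N}_1$ is $\biguplus_{v:\,\deg_G(v)=k+1}(k+1)\cdot\{G-v\}$. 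So from the known sub-multiset of $\mathcal{N}$ with exactly $m-k-1$ edges we subtract the known $\mathcal{N}_2$-contribution and divide the remaining multiplicities by $k+1$, obtaining $\{G-v:\deg_G(v)=k+1\}$. Iterating up to $k=\Delta(G)$ recovers all of $\mathcal{D}(G)$.

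I do not anticipate a serious obstacle: once the identity $\mathcal{N}=\mathcal{N}_1\uplus\mathcal{N}_2$ and the level observation are in hand, the rest is multiset arithmetic. The point needing care is the exactness of the level decomposition --- in particular that nothing in $\mathcal{N}_2$ can ever reach $m-\delta$ edges, and that the $\mathcal{N}_2$-part being subtracted at each stage is genuinely determined by the cards reconstructed so far. The hypotheses $|E(G)|\ge4$ and ``no isolated vertices'' are used mainly to clear away degenerate small graphs (for instance stars), which can be handled directly.
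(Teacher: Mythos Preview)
The paper does not give a proof of this statement: Theorem~\ref{greenwell} is quoted from Greenwell~\cite{Gre71} and used as a black box, so there is no ``paper's own proof'' to compare against.

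That said, your argument is essentially Greenwell's original one and is sound. The identity $\mathcal{N}=\mathcal{N}_1\uplus\mathcal{N}_2$ is exactly right: for $e=xy$ and $w\in V(G)$, the graph $(G-e)-w$ equals $G-w$ when $w\in\{x,y\}$ and equals $(G-w)-e$ otherwise, and regrouping gives $\mathcal{N}_1=\biguplus_v \deg_G(v)\cdot\{G-v\}$ and $\mathcal{N}_2=\biguplus_w \mathcal{E}\mathcal{D}(G-w)$. The edge-count stratification is also correct: members of $\mathcal{N}_1$ have at most $m-\delta$ edges while members of $\mathcal{N}_2$ have at most $m-\delta-1$, so the top layer isolates the minimum-degree vertex-cards. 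Your inductive peeling step is clean; the only place you use the hypotheses is that $\delta\ge1$ (from ``no isolated vertices'') so that the division by $\delta$ in the base case and by $k+1\ge\delta+1$ in later steps is legitimate, and so that every card $G-v$ actually appears in $\mathcal{N}_1$ with positive multiplicity. One small quibble: your closing remark that $m\ge4$ is needed ``to clear away degenerate small graphs (for instance stars)'' is a bit loose --- stars pose no difficulty for your argument as written, and in fact the procedure you describe already works whenever $\delta\ge1$. The bound $m\ge4$ is there to match the standing hypothesis of the Edge Reconstruction Conjecture rather than because the reduction itself breaks down below it.
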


Theorem \ref{greenwell} implies that a graph $G$ with no isolated vertices and $|E(G)|\ge 4$ is edge-reconstructible if it is reconstructible.

\medskip

We prove edge-reconstruction analogues of Theorems~\ref{diam2} and \ref{diam3} (in fact, we prove a stronger edge-reconstruction analogue of Theorem~\ref{diam2}).

\begin{thm}
\label{edgecon_diam2}
If $G\in\mathcal{G}_2$ and is triangle-free, then $G$ is edge-reconstructible.
\end{thm}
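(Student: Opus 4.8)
The plan is to peel off the easy connectivity regimes using known reconstruction results together with Greenwell's Theorem~\ref{greenwell}, and then attack the one genuinely new regime, $\kappa(G)\ge 4$, directly from the edge-deck. Throughout, let $H$ be any graph with $\mathcal{E}\mathcal{D}(H)=\mathcal{E}\mathcal{D}(G)$; the goal is $H\cong G$. Since $G\in\mathcal{G}_2$ is triangle-free, $G$ is in fact a \emph{maximal} triangle-free graph: every non-adjacent pair has a common neighbour, so adding any edge would create a triangle. From $\mathcal{E}\mathcal{D}(G)$ one reads off $n:=|V(G)|$, $m:=|E(G)|$, and the degree sequence, and a Kelly-type count shows $G$ (hence $H$) is triangle-free; by Theorem~\ref{greenwell} one also recovers $\mathcal{D}(G)=\mathcal{D}(H)$. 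In particular $H$ is connected unless every edge-card of $G$ is disconnected, which happens exactly when $G$ is a tree; a tree of diameter $2$ is a star, and stars are reconstructible, hence (Theorem~\ref{greenwell}) edge-reconstructible. So from now on I may assume $G$ is not a star and $H$ is connected with the same order, size and degree sequence as $G$.

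Next I would split on $\kappa(G)$. If $\kappa(G)=1$, pick a cut vertex $v$: any two vertices in different components of $G-v$ can have only $v$ as a common neighbour, forcing $v$ adjacent to every other vertex, and then triangle-freeness forces $G=K_{1,n-1}$, already handled. If $\kappa(G)=2$, then $G$ is $2$-connected, triangle-free and in $\mathcal{G}_2\subseteq\mathcal{G}_2\cup\mathcal{G}_3$, so $G$ is reconstructible by Theorem~\ref{tfree-thm}; if $\kappa(G)=3$, then $G$ is reconstructible by Theorem~\ref{diam2}. In either case $G$ has no isolated vertex and at least $4$ edges, so Theorem~\ref{greenwell} upgrades this to edge-reconstructibility and $H\cong G$.

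The remaining — and main — case is $\kappa(G)\ge 4$, exactly the range still open for vertex-reconstruction, so the argument here cannot route through Theorems~\ref{tfree-thm} or~\ref{diam2}. If $G$ is bipartite, Theorem~\ref{bipartite-thm} already gives reconstructibility, hence edge-reconstructibility; so assume $G$ is non-bipartite, in which case (triangle-free, diameter $2$) $G$ has odd girth exactly $5$ and contains an induced $C_5$. My approach would then be a dichotomy on the size $m$. Since $N_G(v)$ is independent for every $v$ and $G$ has diameter $2$, every non-adjacent pair lies inside some $N_G(v)$, giving $\sum_v\binom{d(v)}{2}\ge\binom{n}{2}-m$, hence $\sum_v d(v)^2\ge n(n-1)$ and therefore $m\ge n(n-1)/(2\Delta)$, where $\Delta=\Delta(G)$. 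If $\Delta$ is moderate — roughly $\Delta\le n/(2\log_2 n)$ — this forces $m$ large enough that Müller's bound ($2^{m-1}>n!$) applies and $G$ is edge-reconstructible outright. This leaves the case where $G$ has a vertex $v$ of very large degree, so $N_G(v)$ is a large independent dominating set; there I would try to recover $v$ and $N_G(v)$ from $\mathcal{D}(G)$ via the degree data and the maximal-triangle-free constraint (namely that $N_G(v)$ is an independent dominating set of the card $G-v$ which moreover contains both endpoints of every non-adjacent pair whose only common neighbour in $G$ was $v$), or else — using that regular graphs are reconstructible — exploit an irregular degree sequence to single out a vertex to peel off. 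An alternative for this whole case is to mimic the structural analysis behind Theorem~\ref{diam2} in the edge setting, where having $m$ rather than $n$ cards may remove the need for the hypothesis $\kappa(G)=3$.

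I expect the large-$\Delta$, non-bipartite subcase of $\kappa(G)\ge 4$ to be the real obstacle: the counting bounds are useless there, and one must argue structurally that no other maximal triangle-free graph shares the edge-deck of $G$ — that is, that the constrained (but not obviously unique) neighbourhood of the high-degree vertex is in fact determined. Everything else in the plan reduces to invoking Theorems~\ref{greenwell}, \ref{tfree-thm}, \ref{diam2}, \ref{bipartite-thm} and standard facts about trees and regular graphs.
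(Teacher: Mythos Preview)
Your plan has a genuine gap: the large-$\Delta$, non-bipartite subcase of $\kappa(G)\ge 4$ is exactly where all the work lies, and you do not have an argument for it --- you say so yourself. M\"uller's bound handles the dense regime, but once $\Delta$ is large the counting gives nothing, and ``try to recover $v$ and $N_G(v)$ from the deck via the maximal-triangle-free constraint'' is a hope, not a proof. Worse, since vertex-reconstruction of triangle-free graphs in $\mathcal{G}_2$ with $\kappa\ge 4$ is precisely the case the paper leaves open, routing through Theorem~\ref{greenwell} cannot close this subcase without a genuinely new idea; your proposal has simply pushed the unknown part of the theorem into a corner and labelled it ``the real obstacle''.

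The paper's proof is completely different and sidesteps the connectivity split altogether. It works directly with a single edge-card $H=G-uv$ and the set $P_H=\{\{x,y\}:d_H(x,y)\ge 3\}$. Because $\diam(G)=2$ and $G$ is triangle-free, one has $\{u,v\}\in P_H$, and every other pair in $P_H$ must contain $u$ or $v$ (paired with a neighbour of the other endpoint). If $|P_H|=1$ the deleted edge is identified immediately; if $|P_H|\ge 3$ a short case analysis on which vertex appears in several pairs again pins down $\{u,v\}$. The only residual case is that $|P_H|=2$ for \emph{every} edge-card; this yields an involution on $E(G)$ pairing each edge $ab$ with an edge $bc$ satisfying $d_G(a)=d_G(c)=d_G(b)-1$, so every edge joins vertices of opposite degree parity, $G$ is bipartite, and Theorem~\ref{bipartite-thm} together with Theorem~\ref{greenwell} finishes. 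This argument is uniform in $\kappa(G)$, short, and uses no density or counting bound --- the entire connectivity detour in your plan is unnecessary.
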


\begin{thm}
\label{edgecon_diam3}
If $G\in\mathcal{G}_3$ and is triangle-free, then $G$ is edge-reconstructible.
\end{thm}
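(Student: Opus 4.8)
The plan is to prove Theorem~\ref{edgecon_diam3} by combining two standard tools from reconstruction theory with the structural information that $G$ is triangle-free and that $\diam(G)=\diam(\overline{G})=3$. The first tool is Greenwell's theorem (Theorem~\ref{greenwell}): since $G\in\mathcal{G}_3$ is connected with diameter $3$, it has no isolated vertices, and $|E(G)|\ge 4$ (indeed, a diameter-$3$ triangle-free graph has many edges), so $\mathcal{D}(G)$ is recoverable from $\mathcal{E}\mathcal{D}(G)$. The second tool is Theorem~\ref{diam3}: every triangle-free $G\in\mathcal{G}_3$ with $\kappa(G)\ge 3$ is reconstructible, hence edge-reconstructible by the remark following Theorem~\ref{greenwell}. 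So the entire content of Theorem~\ref{edgecon_diam3} reduces to handling triangle-free graphs $G\in\mathcal{G}_3$ with $\kappa(G)\le 2$, i.e. $\kappa(G)\in\{1,2\}$. Combined with Theorem~\ref{tfree-thm}, which already covers $\kappa(G)=2$ (via its edge-reconstruction consequence), what actually remains is the case $\kappa(G)=1$, together with a direct edge-reconstruction argument for $\kappa(G)=2$ if one does not want to route through vertex-reconstruction.

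First I would dispose of the easy reductions explicitly: state that if $\kappa(G)\ge 3$ we are done by Theorem~\ref{diam3} and Theorem~\ref{greenwell}, and if $\kappa(G)=2$ we are done by Theorem~\ref{tfree-thm} and Theorem~\ref{greenwell} (noting $G\in\mathcal{G}_3$ is 2-connected here, and 2-connected graphs have no isolated vertices and at least $4$ edges). This leaves $\kappa(G)=1$. For a graph with a cut vertex, the standard approach is to use the fact that the block-cut tree structure, the number of blocks, the sizes of blocks, and in many cases the blocks themselves are reconstructible. Since we have recovered $\mathcal{D}(G)$ from the edge-deck, I can invoke known results on reconstruction of graphs with cut vertices: a connected graph with a cut vertex is reconstructible provided we can reconstruct the ``rooted'' blocks and reassemble them. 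The key observation to exploit is that $\diam(G)=3$ forces the block structure of $G$ to be extremely restricted — with a cut vertex $c$, any two vertices in different blocks have their shortest path through $c$, so each block containing a non-central vertex can have ``radius'' at most a small constant from $c$; more precisely, if $B_1,B_2$ are two end-blocks attached at cut vertices $c_1,c_2$, a vertex deep in $B_1$ and one deep in $B_2$ are at distance $\ge$ (ecc in $B_1$) $+ d(c_1,c_2) + $ (ecc in $B_2$), which must be $\le 3$. This severely limits the number and depth of blocks.

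The main steps, in order, would then be: (i) reduce to $\kappa(G)=1$ as above; (ii) using $\diam(G)=3$, show that $G$ has exactly one ``central'' block $B_0$ and all other blocks are pendant edges or very small graphs attached at vertices of $B_0$ or at distance $1$ from $B_0$, and in particular that there is a cut vertex $c$ all of whose ``sides'' are shallow; (iii) observe that $\diam(\overline{G})=3$ rules out degenerate cases (e.g. $G$ cannot be a star or have a dominating vertex, since then $\overline G$ would be disconnected or have diameter $\le 2$), which pins the structure down further; (iv) from $\mathcal{D}(G)$, reconstruct the degree sequence, the number of pendant vertices and pendant paths, hence the blocks and their attachment points, and finally reassemble $G$ uniquely; (v) handle triangle-freeness throughout, which is what keeps blocks from being complete graphs and is preserved under the reconstruction. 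I expect step~(ii)--(iii), nailing down exactly which block structures are compatible with $\diam(G)=\diam(\overline G)=3$, to be the main obstacle: one must enumerate the possible configurations of a triangle-free diameter-$3$ graph with a cut vertex whose complement also has diameter $3$, and verify that each such configuration is edge-reconstructible (or vertex-reconstructible) — likely there are only a handful of small families, but checking each, and in particular ensuring the ``gluing'' of reconstructed blocks is forced, will require care. A secondary subtlety is confirming the hypotheses of Theorems~\ref{greenwell} and~\ref{tfree-thm}/\ref{diam3} (no isolated vertices, $\ge 4$ edges, correct connectivity) hold in every sub-case so that the reductions are legitimate.
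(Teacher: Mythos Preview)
Your reduction to the three cases $\kappa(G)\ge 3$, $\kappa(G)=2$, and $\kappa(G)=1$ is exactly what the paper does: its proof of Theorem~\ref{edgecon_diam3} is three lines long, citing Theorems~\ref{diam3}, \ref{tfree-thm}, and \ref{conn1diam3} respectively, each combined with Theorem~\ref{greenwell}. So for $\kappa(G)\ge 2$ you match the paper verbatim. (One quibble: your parenthetical ``a diameter-$3$ triangle-free graph has many edges'' is false as stated --- $P_4$ lies in $\mathcal{G}_3$, is triangle-free, and has only $3$ edges; the paper handles this boundary case separately at the end of its proof.)

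Where you diverge is the $\kappa(G)=1$ case. The paper does \emph{not} argue via block--cut trees; instead it proves Theorem~\ref{conn1diam3} directly, and the engine of that proof is the hypothesis $\diam(\overline{G})=3$, which you relegate to ``ruling out degenerate cases.'' Concretely, the paper takes a cut vertex $x$, shows (from $\diam(G)=3$ and triangle-freeness) that $G-x$ has exactly one nontrivial component $C_1$, and then uses the existence of a pair $u,v$ with $d_{\overline{G}}(u,v)=3$ to force $v\in C_1(x)$ with $C_1^x(\emptyset)\subseteq N_G(v)$. This makes $C_1^x(\emptyset)$ independent, hence $G$ is \emph{bipartite} with parts $N_G(x)$ and $N_G(v)$. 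Reconstruction then reduces to a short three-case analysis on the sizes of these parts and the number of pendant vertices at $x$. Your block--cut tree plan would presumably arrive at the same coarse picture (one nontrivial block plus pendant edges), but step~(iv) --- ``reconstruct the blocks and their attachment points and reassemble'' --- is where the real work lies, and without the bipartiteness coming from $\diam(\overline{G})=3$ you have no lever to force uniqueness of the attachment. In short: the paper's key idea is that $\diam(\overline{G})=3$ is a \emph{structural} hypothesis, not merely a nondegeneracy one, and exploiting it yields a much shorter argument than the block-decomposition route you outline.
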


Note that in light of Theorems~\ref{tfree-thm}, \ref{diam3}, and \ref{greenwell}, to prove Theorem~\ref{edgecon_diam3}, it suffices to prove the following.

\begin{thm}\label{conn1diam3}
If $G\in\mathcal{G}_3$ and is triangle-free with $\kappa(G)=1$, then $G$ is reconstructible.
\end{thm}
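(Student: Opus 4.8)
The plan is to exploit the very restrictive structure forced by $G \in \mathcal{G}_3$ together with $\kappa(G) = 1$. Since $\diam(G) = 3$, the graph $G$ is connected, and having a cut vertex means $G$ has at least one nontrivial block decomposition. The first step is to analyze what a cut vertex $v$ can look like in a graph of diameter $3$: if $B_1, B_2, \dots$ are the blocks hanging off $v$, then any two vertices in different blocks have their shortest path passing through $v$, so each such path has length at most $3$, which severely limits the eccentricities within each block and forces all but possibly one block to be very small (essentially, pendant vertices or pendant edges/paths of bounded length attached at $v$). In particular I expect to show that $G$ has exactly one cut vertex, or that the end-blocks are so simple (a single pendant vertex, or a very short pendant structure) that the cards reveal them immediately. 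The triangle-free hypothesis will be used to rule out denser small blocks.

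The second step is the standard reconstruction toolkit for graphs with cut vertices. Recall that a graph with a cut vertex is reconstructible by a classical result (Bondy, and Ulam's original observation): if every card is available, one can identify a cut vertex, reconstruct the blocks, and reassemble them. More precisely, I would invoke (or reprove in this restricted setting) the facts that: (a) whether $G$ has a cut vertex is recognizable from the deck (a connected graph has a cut vertex iff some card is disconnected, or more carefully via counting), (b) the number of leaves/end-blocks and their structure is reconstructible, and (c) for graphs with a cut vertex, the "maximal subtree of blocks" argument of Bondy lets us glue everything back. Here the diameter-$3$ and triangle-free constraints make the block structure so rigid that this reassembly is essentially forced — there is little freedom in how a pseudo-card-deck could be realized differently.

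Concretely, the key steps in order: (1) From the deck, recover $n = |V(G)|$, the degree sequence, the number of edges, and the fact that $\diam(G) = \diam(\overline G) = 3$ (diameter and related parameters are known to be reconstructible, or can be read off since $G$ is connected with a cut vertex). (2) Recognize that $G$ has a cut vertex and locate the "central block" $B_0$ containing most of the graph, together with the pendant structures. (3) Use the triangle-free and diameter-$3$ conditions to show each pendant piece attached at the cut vertex has a rigidly determined small form; in particular $\overline G$ having diameter $3$ (not $\le 2$) prevents $G$ from being, e.g., a graph with a dominating vertex, which constrains things further. (4) Show the card obtained by deleting a vertex in the central block (or a well-chosen pendant vertex) exhibits $B_0$ almost entirely, so $B_0$ is reconstructible, and then reattach the pendant pieces at the unique cut vertex, whose location is identifiable by a degree/eccentricity argument.

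The main obstacle I anticipate is step (3)–(4): pinning down, among all graphs $H$ with $\mathcal{D}(H) = \mathcal{D}(G)$, that $H$ must have the same cut-vertex/block structure and the same central block as $G$, rather than merely the same "local" data. One must rule out an "exchange" where $H$ redistributes edges between the central block and the pendant parts while keeping every card isomorphic. The hope is that $\diam(G) = 3$ forces the pendant parts to be short pendant paths/edges (length $\le 2$ from $v$), for which Bondy-type arguments on trees of blocks apply cleanly, and that the extra condition $\diam(\overline G) = 3$ eliminates the sporadic exceptional configurations (such as $G$ having two adjacent cut vertices each with a single pendant neighbor, or $G$ being close to a complete bipartite graph with a pendant attached) where reconstruction is known to be delicate. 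If the pendant structure is not forced to be a single pendant vertex, a short case analysis on the at-most-two possible pendant shapes, using triangle-freeness, should close the gap.
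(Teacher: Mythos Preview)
Your plan has a real gap at the point where you reach for ``the classical result (Bondy) that a graph with a cut vertex is reconstructible.'' Bondy's theorem covers only separable graphs \emph{without end-vertices}, and Yang's reduction to the $2$-connected case is conditional (it assumes the Reconstruction Conjecture for $2$-connected graphs). In the situation at hand, $G$ necessarily has pendant vertices: once you observe (correctly) that $G-x$ has exactly one nontrivial component, the remaining components are isolated vertices, i.e., leaves of $G$ attached at $x$. So neither off-the-shelf result applies, and the work must be done by hand.

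The second, and more substantive, gap is that your outline never uses $\diam(\overline{G})=3$ in a structural way, only as a side remark. In the paper's proof this is the entire engine: one takes the pair $u,v$ with $d_{\overline{G}}(u,v)=3$, argues that (up to relabeling) $u=x$ and $v\in C_1(x)$, and then that $v$ is adjacent to \emph{every} vertex of $C_1^x(\emptyset)$. With triangle-freeness this makes $C_1^x(\emptyset)$ independent, so $C_1$ --- hence $G$ --- is \emph{bipartite} with parts $C_1^x(\emptyset)\cup\{x\}$ and $C_1(x)\cup T_x$. This bipartiteness is exactly what lets one finish: if the two parts of $C_1$ have different sizes one reconstructs from the disconnected card $G-x$ by a degree count; otherwise one deletes a leaf $y$ and must identify its neighbour $x$ inside the card, which is done by recognising $x$ as a vertex whose neighbourhood is exactly the smaller side of the (now unbalanced) bipartition. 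Your plan to locate $x$ by a ``degree/eccentricity argument'' is not enough here --- in the balanced case there is no degree feature singling $x$ out, and the eccentricity data alone does not distinguish $x$ from other vertices of $C_1(x)$ with the same degree. Without the bipartiteness forced by $\diam(\overline{G})=3$, step~(4) of your outline does not close.
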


A more general problem is to decide if a graph parameter is uniquely determined by its deck or edge-deck. Given a graph $G$, a graph parameter $p(G)$ is reconstructible (resp. edge-reconstructible) if the value of $p(G)$ is the same for each graph $H$ with $\mathcal{D}(H)=\mathcal{D}(G)$ (resp. $\mathcal{E}\mathcal{D}(H)=\mathcal{E}\mathcal{D}(G)$). 
A family of graphs $\mathcal{G}$ is \emph{recognizable}\aaside{(edge-) recognizable}{-3mm} if, for each $G\in\mathcal{G}$, every graph $H$ with $\mathcal{D}(H)=\mathcal{D}(G)$ is also in $\mathcal{G}$. Moreover, $\mathcal{G}$ is \emph{weakly reconstructible}\aaside{weakly (edge-) reconstructible}{1mm} if, for each $G\in\mathcal{G}$ and each $H\in\mathcal{G}$ with $\mathcal{D}(H)=\mathcal{D}(G)$, the graph $H$ is isomorphic to $G$. If $\mathcal{G}$ is both recognizable and weakly reconstructible, then it is reconstructible. We define \emph{edge-recognizable} and \emph{weakly edge-reconstructible} analogously.
We will need the following results on reconstructible graph parameters and recognizable graph classes.

\begin{lem}[\cite{LaSc03}]
\label{deletedvertex}
Given a card $G-v$, the degree of $v$ in $G$, as well as the degrees of the neighbors of $v$ in $G$ are reconstructible. Similarly, given an edge-card $G-e$, the degrees in $G$ of the endpoints of $e$ are edge-reconstructible. 
\end{lem}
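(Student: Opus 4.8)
The plan is to reduce the lemma to two classical recognizability facts — that $|E(G)|$ and the degree sequence of $G$ are reconstructible (and edge-reconstructible) — and then to extract all the claimed quantities by an elementary comparison of degree multisets. I describe the vertex case first, then the edge case.

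\textbf{Vertex case.} First note that $n:=|V(G)|=|V(G-v)|+1$ is reconstructible, and by Kelly's lemma so is $|E(G)|$: each edge of $G$ lies in exactly $n-2$ cards, so $|E(G)|=\frac{1}{n-2}\sum_{C\in\mathcal{D}(G)}|E(C)|$. Hence for any card $C=G-v$ we have $\deg_G(v)=|E(G)|-|E(C)|$, and this value is the same for every graph with deck $\mathcal{D}(G)$, which proves the first assertion. For the neighbours, I would use that the degree multiset $D_G$ of $G$ is reconstructible (a classical consequence of Kelly's lemma, e.g.\ by counting subgraphs isomorphic to the stars $K_{1,j}$; see \cite{LaSc03}). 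Now fix a card $C=G-v$ and set $d:=\deg_G(v)$. Passing from $G$ to $C$ removes $v$ — a vertex of degree $d$ — and lowers the degree of each of the $d$ neighbours of $v$ by exactly $1$; so, as multisets, $D_{G-v}$ is obtained from $D_G$ by deleting one copy of $d$ and then decrementing exactly $d$ of the remaining entries by $1$. Since $D_G$, $d$, and $D_{G-v}$ are all reconstructible, it remains to observe that the decremented sub-multiset — which is precisely $\{\deg_G(u):u\in N_G(v)\}$ — is uniquely recoverable: scanning degree values from largest to smallest, the number of copies of a value $p$ in $D_{G-v}$ equals the number of copies of $p$ left undecremented plus the number of copies of $p+1$ that were decremented, and the latter count is already fixed by the previous step, so the number of decremented copies of $p$ is forced.

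\textbf{Edge case.} The argument is essentially the same. Here $n=|V(G)|=|V(G-e)|$ and $m:=|E(G)|=|E(G-e)|+1$ are edge-reconstructible, and the degree multiset $D_G$ of $G$ is edge-reconstructible as well (again standard, via the edge analogue of Kelly's lemma, or from the counting identity $\sum_{e\in E(G)}\bigl(\text{number of degree-}k\text{ vertices in }G-e\bigr)=(m-k)m_k+(k+1)m_{k+1}$, where $m_k$ is the number of degree-$k$ vertices of $G$; see \cite{LaSc03}). Given an edge-card $C=G-e$ with $e=xy$, the multiset $D_{G-e}$ is obtained from $D_G$ by decrementing exactly two entries, $\deg_G(x)$ and $\deg_G(y)$, by $1$ each, and the same largest-to-smallest scan recovers the pair $\{\deg_G(x),\deg_G(y)\}$ from $D_G$ and $D_{G-e}$. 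Hence these two degrees are edge-reconstructible.

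\textbf{Main obstacle.} Once the background facts — reconstructibility and edge-reconstructibility of $|E(G)|$ and of the degree sequence — are granted, the remainder is just the multiset bookkeeping above, which is routine. The only genuinely delicate point is the solvability of the degree-sequence counting identities in degenerate regimes, for instance when the maximum degree of $G$ is close to $n-1$ and the relevant linear systems become singular; but such cases reduce to sparse graphs lying in already-known reconstructible classes (e.g.\ forests), so I would simply cite \cite{LaSc03} for the degree-sequence statements and present only the comparison argument in full.
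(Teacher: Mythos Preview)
The paper does not give its own proof of this lemma; it is quoted with a citation to \cite{LaSc03} and used thereafter as a black box, so there is nothing to compare against.

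Your argument is correct and is one standard route. Two remarks. First, a simplification: once you have $\deg_G(v)=|E(G)|-|E(G-v)|$ for each card, the degree multiset $D_G$ of $G$ is simply the multiset of these values taken over all cards --- no separate appeal to Kelly's lemma for stars is needed. Second, your ``main obstacle'' paragraph is a red herring. In the vertex setting there is no linear system to solve at all (previous remark), and in the edge setting the top-down recurrence $(m-k)m_k+(k+1)m_{k+1}=\text{(edge-deck datum)}$ fails to determine $m_k$ only when $m=k$, i.e.\ when some vertex is incident to every edge; the graph is then a star together with possible isolated vertices, and its degree sequence is recoverable by inspection. I would delete that paragraph. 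The multiset-comparison step --- scanning degree values from largest to smallest to pin down exactly which entries were decremented --- is correct as written in both the vertex and edge cases.
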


\begin{lem}[Kelly's Lemma \cite{Kel57}]\label{Kelly}
The number of occurrences of any proper subgraph of $G$ is reconstructible.
\end{lem}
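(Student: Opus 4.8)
The plan is a single double-counting identity across the deck. Fix a graph $F$ to be counted and set $n := |V(G)|$ and $k := |V(F)|$; here ``proper'' means $k < n$, which is precisely the regime in which the count can be recovered. Write $s(F, G)$ for the number of subgraphs of $G$ isomorphic to $F$. My goal is to express $s(F, G)$ using only the multiset $\mathcal{D}(G) = \{G - v : v \in V(G)\}$.

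The one observation I need is local: a fixed copy of $F$ inside $G$ --- that is, a subgraph $H \subseteq G$ with $H \cong F$ --- occupies exactly $k$ vertices, so $H$ survives in the card $G - v$ if and only if $v \notin V(H)$. There are precisely $n - k$ such vertices $v$, so each copy of $F$ in $G$ reappears in exactly $n - k$ of the cards. Turning this around and summing over the deck,
$$\sum_{v \in V(G)} s(F, G - v) = \sum_{\substack{H \subseteq G \\ H \cong F}} |\{\, v \in V(G) : v \notin V(H)\,\}| = (n - k)\, s(F, G),$$
where the first equality merely groups the contributions to the left-hand sum according to which copy $H$ of $F$ is responsible for each of them.

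The left-hand side is manifestly reconstructible: for each individual card $G - v$ the quantity $s(F, G - v)$ is computed from that card alone, and the deck supplies every card, so their sum is determined by $\mathcal{D}(G)$. Since $F$ is a proper subgraph we have $n - k \geq 1$, and I may divide to conclude
$$s(F, G) = \frac{1}{n - k} \sum_{v \in V(G)} s(F, G - v),$$
which is determined by the deck, as required. There is essentially no obstacle in the argument itself; the only point meriting care is the degenerate case $k = n$, where every card-count vanishes and the identity collapses to $0 = 0 \cdot s(F, G)$, carrying no information. This is exactly why the hypothesis that $F$ be a proper (strictly smaller) subgraph cannot be dropped, and it is the single place where the statement's wording does real work.
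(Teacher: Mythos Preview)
Your argument is correct and is exactly the classical double-counting proof of Kelly's Lemma. The paper does not give its own proof of this statement; it simply quotes the lemma with a citation to \cite{Kel57}, so there is nothing to compare against beyond noting that what you wrote is the standard proof. Your remark that the hypothesis $k<n$ is precisely what makes the division legitimate is the right reading of ``proper'' in this context.
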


\begin{lem}[\cite{BoHe77}]
\label{connectivity}
The connectivity of $G$ is reconstructible. 
\end{lem}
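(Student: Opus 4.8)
The plan is to reconstruct $\kappa(G)$ by counting, for each $j$, whether $G$ has a disconnected induced subgraph on $j$ vertices. First I would reconstruct the basic datum $n:=|V(G)|$ (the number of cards) and recognize the two degenerate cases: whether $G$ is disconnected, in which case $\kappa(G)=0$, and whether $G=K_n$, in which case $\kappa(G)=n-1$. Disconnectedness is recognizable directly from the deck, since a connected graph on $n\ge 3$ vertices has a spanning tree with at least two leaves and deleting such a leaf leaves a connected card, whereas a disconnected graph has at most one connected card; and $G=K_n$ is detected from $n$ together with the reconstructible edge count. For the remaining case --- $G$ connected and not complete --- I would use the observation that a set $S\subseteq V(G)$ of size $k$ is a vertex cut exactly when the induced subgraph $G[V(G)\setminus S]$ on $n-k$ vertices is disconnected. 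Hence
\[
\kappa(G) = n - \max\{\, j : 2\le j\le n-1 \text{ and } G \text{ has a disconnected induced subgraph on } j \text{ vertices}\,\},
\]
so it suffices to decide, for each $j\le n-1$, whether such a subgraph exists.

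The key step is that these existence questions are reconstructible. For a fixed graph $F$ on $j<n$ vertices, the number of subgraphs of $G$ isomorphic to $F$ is reconstructible by Kelly's Lemma (Lemma~\ref{Kelly}); by the standard triangular (M\"obius) inversion relating subgraph counts to induced-subgraph counts on a common vertex size, the number of induced subgraphs of $G$ isomorphic to $F$ is therefore also reconstructible. Summing this reconstructible quantity over all (finitely many) disconnected graphs $F$ on $j$ vertices yields the number of $j$-subsets $T\subseteq V(G)$ with $G[T]$ disconnected, which is thus reconstructible for every $j\le n-1$. In particular, whether this count is positive is reconstructible, and taking the largest $j$ for which it is positive recovers $\kappa(G)$ via the displayed formula.

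The main obstacle is the passage from ``subgraph'' counts to ``induced subgraph'' counts, that is, making sure the number of vertex subsets inducing a disconnected graph is genuinely reconstructible from Lemma~\ref{Kelly}; once the triangular invertibility of the subgraph-to-induced-subgraph transition is in hand this is routine, but it must be set up carefully. The secondary points requiring care are the boundary conventions: restricting to $j\le n-1$ so that Kelly's Lemma applies only to proper subgraphs (the case $j=n$, i.e.\ $G$ itself disconnected, cannot be read off this way, which is precisely why the case $\kappa(G)=0$ is handled separately), and the convention $\kappa(K_n)=n-1$, which is consistent here because $K_n$ has no disconnected induced subgraph at all, so the maximum in the formula would be taken over the empty set.
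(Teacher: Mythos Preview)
The paper does not prove this lemma; it is quoted from \cite{BoHe77} and used as a black box, so there is no ``paper's own proof'' to compare against.

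Your argument is correct and is one of the standard routes to this fact. The recognition of disconnectedness via the number of connected cards is fine (a connected graph on $n\ge 3$ vertices has at least two connected cards, coming from leaves of a spanning tree, while a disconnected graph has at most one). The displayed formula for $\kappa(G)$ is exactly the translation of ``smallest cut'' into ``largest disconnected induced subgraph''. The passage from subgraph counts (given by Lemma~\ref{Kelly}) to induced-subgraph counts via the upper-triangular system on graphs with a fixed number $j$ of vertices, ordered by edge inclusion, is legitimate and is indeed the point that needs to be stated carefully; once that is done, summing over disconnected $F$ on $j$ vertices gives the desired reconstructible quantity. Your treatment of the boundary cases ($j=n$ handled separately because Kelly's Lemma only covers proper subgraphs; $K_n$ handled by the edge count) is also correct.
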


\begin{lem}[\cite{GMP03}]
\label{diam-recog}
Both $\mathcal{G}_2$ and $\mathcal{G}_3$ are recognizable.
\end{lem}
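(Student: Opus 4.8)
The plan is to reduce both halves of the lemma to the reconstructibility of a single, purely local quantity: let $n_0(G)$ denote the number of unordered non-adjacent pairs of vertices of $G$ that have \emph{no} common neighbour. The key observation is that a non-adjacent pair lies at distance exactly $2$ if and only if it has a common neighbour, so $n_0(G)=0$ iff $\diam(G)\le 2$, and $n_0(G)>0$ iff $\diam(G)\ge 3$ (for disconnected $G$ one has $n_0(G)>0$ as well, since cross-component pairs have no common neighbour). I would first record the easy reductions. Since $|V(G)|$, $|E(G)|$, and the degree sequence of $G$ are reconstructible, and since the deck $\mathcal{D}(\overline{G})$ is obtained from $\mathcal{D}(G)$ by complementing each card, it suffices to reconstruct $n_0(G)$ and $n_0(\overline{G})$. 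Recognizing $\mathcal{G}_2$ is then immediate: $G\in\mathcal{G}_2$ iff $n_0(G)=0$ and $|E(G)|<\binom{|V(G)|}{2}$.

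For $\mathcal{G}_3$ I would first prove the folklore fact that $\diam(G)\ge 3$ implies $\diam(\overline{G})\le 3$. Indeed, a pair $u,v$ with $d_G(u,v)\ge 3$ satisfies $uv\in E(\overline{G})$ and, since $u,v$ have no common $G$-neighbour, every other vertex is $\overline{G}$-adjacent to $u$ or to $v$; thus $\{u,v\}$ is a dominating edge of $\overline{G}$, and any two vertices $x,y$ are joined in $\overline{G}$ by a walk $x\!-\!(u\text{ or }v)\!-\!(v\text{ or }u)\!-\!y$ of length at most $3$. By the symmetric statement, $\diam(G)=\diam(\overline{G})=3$ holds iff $\diam(G)\ge 3$ and $\diam(\overline{G})\ge 3$; equivalently $G\in\mathcal{G}_3$ iff $n_0(G)>0$ and $n_0(\overline{G})>0$. (Note this automatically enforces connectivity of both $G$ and $\overline{G}$, so no separate connectivity test is needed.)

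The technical heart is reconstructing $n_0(G)$, which is not directly a subgraph count because ``no common neighbour'' is a non-existence condition. Write $c_{uv}=|N_G(u)\cap N_G(v)|$ and let $n_j$ be the number of non-adjacent pairs with exactly $j$ common neighbours. For each $i\ge 1$, Lemma~\ref{Kelly} reconstructs $\sum_{\{u,v\}}\binom{c_{uv}}{i}$ (the number of copies of $K_{2,i}$ with a distinguished part of size two) and the analogous sum over adjacent pairs (copies of the book $K_{2,i}+uv$); subtracting gives the reconstructible quantity $D_i:=\sum_{\{u,v\}\notin E(G)}\binom{c_{uv}}{i}=\sum_{j\ge i}\binom{j}{i}n_j$. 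A binomial inversion then yields $\sum_{j\ge 1}n_j=\sum_{i\ge 1}(-1)^{i+1}D_i$, and since the number of non-adjacent pairs $\binom{|V(G)|}{2}-|E(G)|$ is known, I recover $n_0(G)=\bigl(\binom{|V(G)|}{2}-|E(G)|\bigr)-\sum_{j\ge 1}n_j$.

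The step I expect to be the main obstacle is a boundary issue: $K_{2,i}$ and the corresponding book have $i+2$ vertices, so Lemma~\ref{Kelly} applies only for $i\le |V(G)|-3$, and the alternating sum above is missing exactly the term $D_{|V(G)|-2}=n_{|V(G)|-2}$. This count equals the number of non-adjacent pairs of vertices both of degree $|V(G)|-2$, i.e. the number of $K_2$-components of $\overline{G}$. I would reconstruct it separately: apply Lemma~\ref{connectivity} to $\mathcal{D}(\overline{G})$ to decide whether $\overline{G}$ is connected (if so, $n_{|V(G)|-2}=0$), and otherwise use the reconstructibility of the component multiset of a disconnected graph to count its $K_2$-components. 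Handling this single degenerate term cleanly — together with the few small-$n$ cases where the subgraph counts degenerate — is the part that will require the most care; the rest is bookkeeping with Kelly's Lemma and the complement correspondence.
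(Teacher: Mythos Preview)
The paper does not supply its own proof of this lemma; it is quoted from \cite{GMP03} and used as a black box. So there is no in-paper argument to compare against, and your proposal should be judged on its own merits.

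Your argument is correct. The reduction of membership in $\mathcal{G}_2$ and $\mathcal{G}_3$ to the single invariant $n_0(\cdot)$, together with the folklore implication $\diam(G)\ge 3\Rightarrow\diam(\overline{G})\le 3$, is exactly right and cleanly handles connectivity as a by-product. The binomial-inversion step recovering $\sum_{j\ge 1}n_j$ from the $D_i$'s is valid, and your identification of $T_i$ and $E_i$ with (multiples of) the subgraph counts of $K_{2,i}$ and the book $B_i$ is accurate once the automorphism factors for $i\in\{1,2\}$ are absorbed.

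One simplification for the boundary term $D_{n-2}=n_{n-2}$: rather than invoking the component multiset, note that if $\overline{G}$ is disconnected then $\overline{G}$, and hence $G$, is reconstructible outright (disconnected graphs are reconstructible), so recognizability is immediate in that case; otherwise $\overline{G}$ is connected on at least three vertices and has no $K_2$-component, giving $n_{n-2}=0$. Alternatively, Lemma~\ref{deletedvertex} applied to each card $\overline{G}-v$ with $d_{\overline{G}}(v)=1$ tells you the degree of $v$'s unique $\overline{G}$-neighbour, so you can directly count how many degree-$1$ vertices of $\overline{G}$ have a degree-$1$ neighbour; half that number is $n_{n-2}$. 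Either route avoids the heavier machinery.
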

This paper is organized as follows. In Section \ref{sec:reconstruction}, we prove Theorems~\ref{diam2}, \ref{diam3}, and \ref{conn1diam3} which address our results on reconstruction. In Section \ref{sec:edge-reconstruction}, we prove Theorems \ref{edgecon_diam2} and \ref{edgecon_diam3} which address edge reconstruction.

We end this section with the following note. Ideally, we would like an edge reconstruction result similar to Theorem~\ref{thmA} that would restrict the class of graphs critical for proving the Edge Reconstruction Conjecture to those which lie in $\mathcal{G}_2\cup \mathcal{G}_3$. If such a result is proved, then Theorems~\ref{edgecon_diam2} and \ref{edgecon_diam3} would further restrict the class of critical graphs to those containing triangles. We remark that such a result is unlikely to be proved using techniques similar to those used in the proof of Theorem~\ref{thmA}. In particular, the restricted-diameter result of Gupta et al. \cite{GMP03} relies on the fact that $\mathcal{D}(\overline{G})$ is reconstructible from $\mathcal{D}(G)$ for every graph $G$. This is not true for edge reconstruction. Nevertheless, the advantages of having such a result merit further investigation.

\section{Reconstruction: Proofs of Theorems~\ref{diam2}, \ref{diam3}, and \ref{conn1diam3}}\label{sec:reconstruction}

For every positive integer $k$, denote by $[k]$ the set $\{1,2,\dots,k\}$. Let $G$ be a graph with $\kappa(G)=k\geq3$, and let $S=\{x_1,x_2,\dots,x_k\}$ be a cut set of $G$. For $p\geq2$, denote by $C_1,C_2,\dots,C_p$ the components of $G-S$. A component is \Emph{trivial} if it is a single vertex. We group the vertices in each component into \Emph{classes} based on their neighborhood in $S$. Fix $i\in[p]$ and a vertex $v\in C_i$, and let $\{x_{i_1},x_{i_2},\dots,x_{i_m}\}$ be the neighbors of $v$ in $S$, where $\{i_1,i_2,\dots,i_m\}\subseteq[k]$. We say $v$ is \emph{in the class $C_i(\{i_1,i_2,\dots,i_m\})$}, where $C_i(\{i_1,i_2,\dots,i_m\})$ denotes the subset of vertices in $C_i$ whose neighbors in $S$ are precisely $\{x_{i_1},x_{i_2},\dots,x_{i_m}\}$. For convenience, we write \Emph{$C_i(j)$} for $C_i(\{j\})$. We say the class $C_i(\{i_1,i_2,\dots,i_m\})$ \emph{contains the index $j$} if $j=i_q$ for some $q\in[m]$. If a vertex $v\in C_i$ is not adjacent to any vertex in $S$, then $v$ is in the class \Emph{$C_i(\emptyset)$}. Similarly, if a vertex $v\in C_i$ is adjacent to every vertex in $S$, then $v$ is in the class \Emph{$C_i(S)$}. Note that vertices which are trivial components are adjacent to every vertex in $S$ (by minimality of $S$), i.e., $C_j=C_j(S)$ for every trivial component $C_j$ of $G-S$. Moreover, in a triangle-free graph, every class of a component forms an independent set, except possibly for $C_i(\emptyset)$.  

In their paper \cite{BM12}, Balakumar and Monikandan implicitly introduced the notion of \emph{the classes of a component} when dealing with triangle-free graphs. This helped give more structure to the graph. Their proofs (for the most part) relied on reconstructing the graph from cards which delete a vertex of degree 1 or a cut vertex. Such vertices are ``special" in the sense that their set of neighbors is restricted which narrows down the number of cases to consider. We will use a similar approach. However, we note that the higher the connectivity of the graph is, the less ``special" those vertices become. To avoid this problem, we will try to deduce as much about the structure of the graph as possible (using only what we know about its diameter and connectivity) before reconstructing it.

It follows from Lemmas~\ref{Kelly}, \ref{connectivity}, and \ref{diam-recog} that the class of triangle-free graphs with connectivity $k\in\mathbb{N}$ which lie in $\mathcal{G}_2\cup \mathcal{G}_3$ are recognizable. Therefore, to prove that the graphs in Theorems~\ref{diam2}, \ref{diam3}, and \ref{conn1diam3} are reconstructible, it suffices to show that they are weakly reconstructible.

\begin{proof}[\textbf{Proof of Theorem~\ref{diam2}}]
Let $G$ be a triangle-free graph in $\mathcal{G}_2$ with $\kappa(G)=3$. Pick a card $H$ with connectivity 2 and let $H:=G-x_1$ (such a card exists because $G$ has a cut set of size 3 and some card deletes one of its vertices). Let $\{x_2,x_3\}$ be a cut set of size 2 in $H$. Since $\kappa(G)=3$, the set $S:=\{x_1,x_2,x_3\}$ is a cut set of $G$. 

Let $C_1,\dots,C_p$ be the components of $G-S$. We show that $G-S$ has at most one nontrivial component. By contradiction, assume $C_1$ and $C_2$ are nontrivial. Consider an edge $u_1u_2$ in $C_1$ and an edge $v_1v_2$ in $C_2$. Since $G$ is triangle-free, at least one of $u_1$ and $u_2$ is adjacent to at most one vertex in $S$; say $u_1$. If $u_1$ is adjacent to no vertex in $S$, then $d_G(u_1,v_1)\geq{3}$ contradicting $\diam(G)=2$. So, by symmetry, assume $u_1$ is only adjacent to $x_1$ in $S$. Note that $v_1$ and $v_2$ are not both adjacent to $x_1$; otherwise, we get a triangle. So, assume $v_1$ is nonadjacent to $x_1$. Now, $d_G(u_1,v_1)\geq3$ contradicting $\diam(G)=2$. Thus, $G-S$ has at most one nontrivial component, say $C_1$.

To reconstruct $G$ from $H$, we need to identify the vertices in $H$ adjacent to $x_1$ in $G$. Observe that each vertex in a trivial component of $G-S$ must be adjacent to each vertex of $S$ in $G$ (in particular, it is adjacent to $x_1$); otherwise, $G$ contains a smaller cut set. Moreover, if $C_1$ is trivial, then $G$ is a complete bipartite graph and is reconstructible by Theorem~\ref{bipartite-thm}. Thus, we assume $C_1$ is nontrivial.

\begin{figure}[H]
\centering
\begin{tikzpicture}[scale=0.8, every node/.style={scale=0.8}]
\draw[thick, rounded corners, fill=gray!30!white] (8.5,1) rectangle (15.5,-9);
\draw[thick] (16.2,-4) node[scale=1.5] {$C_1$};
\tkzDefPoint(0:1){C_2}
\tkzDefShiftPoint[C_2](-90:2){C_3}
\tkzDefShiftPoint[C_3](-90:2){dot1}
\tkzDefShiftPoint[dot1](-90:1){dot2}
\tkzDefShiftPoint[dot2](-90:1){dot3}
\tkzDefShiftPoint[dot3](-90:2){C_p}
\draw[thick, rounded corners] (4,-3) rectangle (5.5,-6);
\tkzDefShiftPoint[dot1](0:3.75){x_2}
\tkzDefShiftPoint[x_2](-90:1){x_3}
\draw[thick] (14.2,-1) edge[bend left] (14.2,-4.4) (14.2,-1) edge[bend left] (14.2,-6.8);
\draw[thick] (9.2,-7.7) edge[out=210, in=-30] (x_3) (9.2,-3.5) -- (x_2);
\draw[thick] (C_2) -- (x_2) -- (C_3) (C_p) -- (x_2) (C_2) -- (x_3) -- (C_3) (C_p) -- (x_3);
\draw[thick] (x_3) -- (9,-1) -- (x_2) (10.5,-0.4) edge[out=30, in=150] (13.3,-0.4);
\tkzDrawPoints[scale=2, fill=white, thick](C_2,C_3,C_p)
\tkzDrawPoints[fill=black, thick](dot1,dot2,dot3)
\tkzLabelPoints[scale=1.5, left](C_2,C_3,C_p)
\tkzDrawPoints[scale=2, fill=white, thick](x_2,x_3)
\tkzLabelPoints[scale=1.2,above=0.1cm](x_2)
\tkzLabelPoints[scale=1.2,below=0.1cm](x_3)
\draw[thick, rounded corners, fill =white] (9,-0.4) rectangle (12,-1.6) (12.4,-0.4) rectangle (14.2,-1.6);
\draw[thick] (10.5,-1) node[scale=1.5] {$C_1(\{2,3\})$} (13.3,-1) node[scale=1.5] {$C_1(1)$};
\draw[thick, rounded corners, fill =white] (9,-3.8) rectangle (12.4,-5) (12.4,-3.8) rectangle (14.2,-5);
\tkzDefShiftPoint[x_2](-0.1:5.4){L_{x_2}}
\tkzDefShiftPoint[L_{x_2}](0:2.6){A_2}
\tkzLabelPoints[scale=1.5](L_{x_2},A_2)
\draw[thick, decorate, decoration={calligraphic brace, amplitude=3mm}] (9,-3.7) -- (14.2,-3.7) (11.7,-2.9) node[scale=1.2] {$C_1(2)\cup C_1(\{1,2\})$};
\draw[thick, rounded corners, fill =white] (9,-6.2) rectangle (12.4,-7.4) (12.4,-6.2) rectangle (14.2,-7.4);
\tkzDefShiftPoint[L_{x_2}](-90:2.4){L_{x_3}}
\tkzDefShiftPoint[L_{x_3}](0:2.6){A_3}
\tkzLabelPoints[scale=1.5](L_{x_3},A_3)
\draw[thick, decorate, decoration={calligraphic brace, mirror, amplitude=3mm}] (9,-7.5) -- (14.2,-7.5) (11.7,-8.3) node[scale=1.2] {$C_1(3)\cup C_1(\{1,3\})$};
\draw[thick] (12,2) node[scale=1.5] {$C_1(S)\cup C_1(\emptyset)$};
\path[thick, ->] (10,2) edge[out=180, in=135] (12,0.5);
\end{tikzpicture}

\vspace{1cm}
\centering
\begin{tikzpicture}[scale=0.8, every node/.style={scale=0.8}]
\draw[thick, rounded corners, fill=gray!30!white] (8.5,1) rectangle (15.5,-9);
\draw[thick] (16.2,-4) node[scale=1.5] {$C_1$};
\tkzDefPoint(0:1){C_2}
\tkzDefShiftPoint[C_2](-90:2){C_3}
\tkzDefShiftPoint[C_3](-90:2){dot1}
\tkzDefShiftPoint[dot1](-90:1){dot2}
\tkzDefShiftPoint[dot2](-90:1){dot3}
\tkzDefShiftPoint[dot3](-90:2){C_p}
\draw[thick, rounded corners] (4,-2) rectangle (5.5,-6);
\tkzDefShiftPoint[dot1](0:3.75){x_2}
\tkzDefShiftPoint[x_2](90:1){x_1}
\tkzDefShiftPoint[x_2](-90:1){x_3}
\draw[thick] (14.2,-1) edge[bend left] (14.2,-4.4) (14.2,-1) edge[bend left] (14.2,-6.8);
\draw[thick] (11.5,-6.2) -- (9.8,-5) -- (9.8,-6.2) -- (11.5,-5);
\draw[thick, dashed] (11.5,-5) -- (11.5,-6.2) (9.8,-5) edge[out=-30, in=-150] (11.5,-5) (9.8,-6.2) edge[out=30, in=150] (11.5,-6.2);
\draw[thick] (9.2,-7.7) edge[out=210, in=-30] (x_3) (9.2,-3.5) edge[in=20, out=150] (x_2) (x_1) edge[in=200, out=10] (12.8,-1.6);
\draw[thick] (C_2) -- (x_2) -- (C_3) (C_p) -- (x_2) (C_2) -- (x_3) -- (C_3) (C_p) -- (x_3);
\draw[thick] (x_3) -- (9,-1) -- (x_2) (C_2) -- (x_1) -- (C_3) (x_1) -- (C_p) (10.5,-0.4) edge[out=30, in=150] (13.3,-0.4);
\tkzDrawPoints[scale=2, fill=white, thick](C_2,C_3,C_p)
\tkzDrawPoints[fill=black, thick](dot1,dot2,dot3)
\tkzLabelPoints[scale=1.5, left](C_2,C_3,C_p)
\tkzDrawPoints[scale=2, fill=white, thick](x_1,x_2,x_3)
\tkzLabelPoints[scale=1.2,above=0.1cm](x_2,x_1)
\tkzLabelPoints[scale=1.2,below=0.1cm](x_3)
\draw[thick] (4.75,-1.5) node[scale=1.5] {$S$}; 
\draw[thick, rounded corners, fill =white] (9,-0.4) rectangle (12,-1.6) (12.4,-0.4) rectangle (14.2,-1.6);
\draw[thick] (10.5,-1) node[scale=1.5] {$C_1(\{2,3\})$} (13.3,-1) node[scale=1.5] {$C_1(1)$};
\draw[thick, rounded corners, fill =white] (9,-3.8) rectangle (10.7,-5) (10.7,-3.8) rectangle (12.4,-5) (12.4,-3.8) rectangle (14.2,-5);
\tkzDefShiftPoint[x_2](-0.05:4.5){B_2}
\tkzDefShiftPoint[B_2](0:1.6){B_{12}}
\tkzDefShiftPoint[B_{12}](0:1.9){A_2}
\tkzLabelPoints[scale=1.5](B_2,B_{12},A_2)
\draw[thick, decorate, decoration={calligraphic brace, amplitude=3mm}] (9,-3.7) -- (14.2,-3.7) (11.7,-2.9) node[scale=1.2] {$C_1(2)\cup C_1(\{1,2\})$};
\draw[thick, rounded corners, fill =white] (9,-6.2) rectangle (10.7,-7.4) (10.7,-6.2) rectangle (12.4,-7.4) (12.4,-6.2) rectangle (14.2,-7.4);
\tkzDefShiftPoint[B_2](-90:2.4){B_3}
\tkzDefShiftPoint[B_3](0:1.6){B_{13}}
\tkzDefShiftPoint[B_{13}](0:1.9){A_3}
\tkzLabelPoints[scale=1.5](B_3,B_{13},A_3)
\draw[thick, decorate, decoration={calligraphic brace, mirror, amplitude=3mm}] (9,-7.5) -- (14.2,-7.5) (11.7,-8.3) node[scale=1.2] {$C_1(3)\cup C_1(\{1,3\})$};
\draw[thick] (12,2) node[scale=1.5] {$C_1(S)\cup C_1(\emptyset)$};
\path[thick, ->] (10,2) edge[out=180, in=135] (12,0.5);
\end{tikzpicture}
\captionsetup{skip=30pt} 
\caption{The figure shows the structures of $G$ and $H$ in Theorem~\ref{diam2}. Top: the view in the card $H$. Bottom: the view in $G$. Solid lines between two sets indicate that every vertex in the first set is adjacent to every vertex in the second set. Analogously, dashed lines indicate non-adjacency. The component $C_1$ is divided into classes as defined in Section~\ref{sec:reconstruction}. The set $C_1(S)\cup C_1(\emptyset)$ is grayed out to indicate that it is empty.}
\label{fig-diam2}
\end{figure}
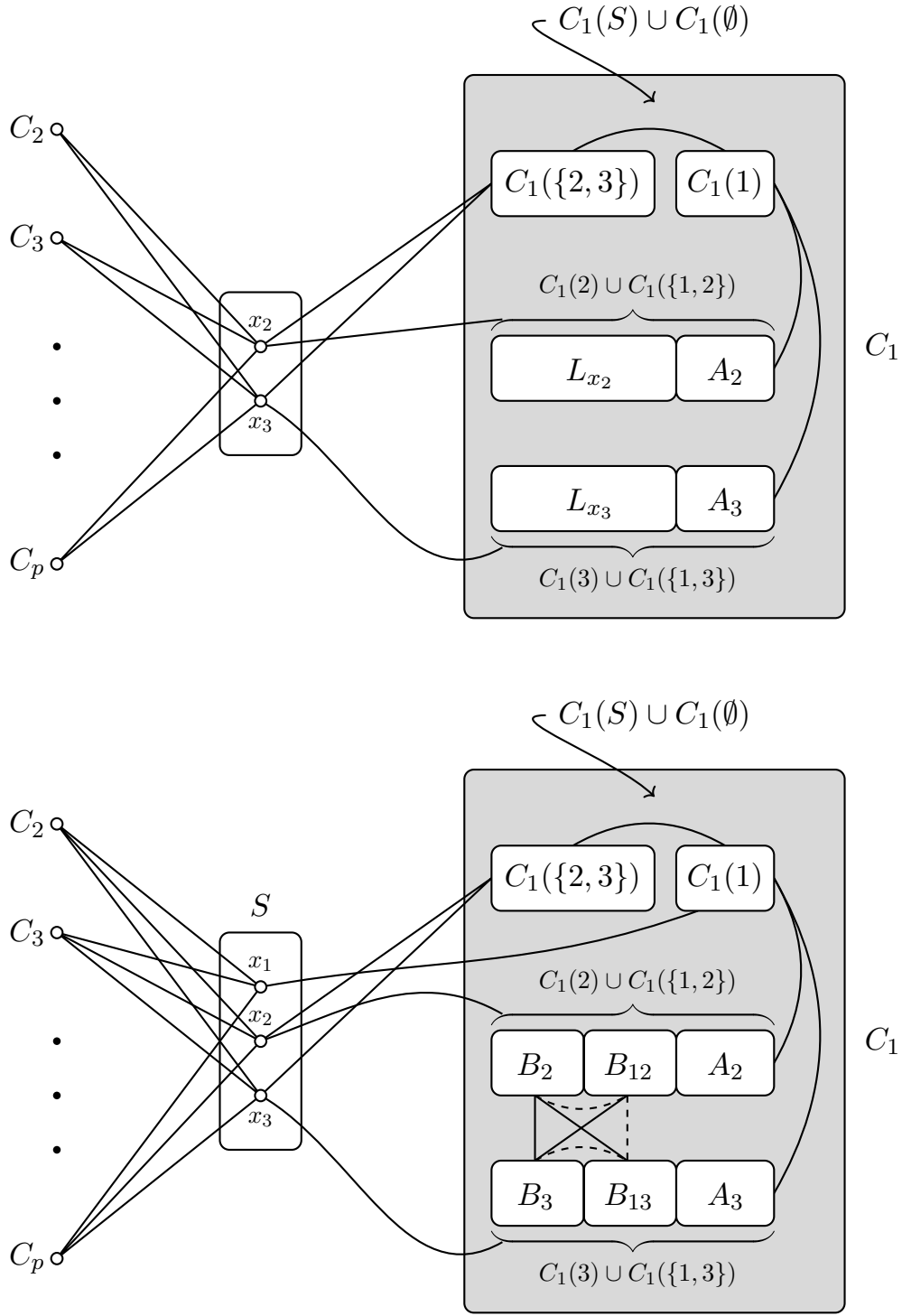

We identify the classes of $C_1$ with respect to the cut set $S$. Note that $C_1(\emptyset) = \emptyset$; otherwise, pick $v\in C_1(\emptyset)$ and observe that $d_G(v,u)\geq3$ for every $u\in C_i$ ($i\neq1)$, a contradiction. Therefore, $C_1(1)$ consists of all vertices in $C_1$ adjacent to neither $x_2$ nor $x_3$ in $H$. Furthermore, $C_1(S)=\emptyset$; otherwise, pick $v\in C_1(S)$ and let $u\in N_{C_1}(v)$. Since $G$ is triangle-free, $u\in C_1(\emptyset)$, a contradiction. Thus, $C_1(\{2,3\})$ consists of all vertices in $C_1$ adjacent to both $x_2$ and $x_3$ in $H$.

This leaves identifying the vertices of $C_1$ in classes $C_1(2), C_1(3), C_1(\{1,2\})$, and $C_1(\{1,3\})$. Let $A_2:=N_{C_1(2)\cup C_1(\{1,2\})}(C_1(1))$ and $A_3:=N_{C_1(3)\cup C_1(\{1,3\})}(C_1(1))$\aside{$A_2,A_3$}. Note that every $v\in A_2\cup A_3$ is nonadjacent to $x_1$, since $G$ is triangle-free. So, it suffices to identify the vertices of each of the following ``$B$" sets. Let $B_2:=C_1(2)-A_2, B_3:=C_1(3)-A_3, B_{12}:=C_1(\{1,2\})-A_2$, and\aside{$B_2,B_3$} $B_{13}:=C_1(\{1,3\})-A_3$; see Figure~\ref{fig-diam2}\aside{$B_{12},B_{13}$}. (Observe that $B_{12}=C_1(\{1,2\})$ and $B_{13}=C_1(\{1,3\})$ since $G$ is triangle-free, so no vertex in $C_1(\{1,2\})$ or $C_1(\{1,3\})$ is adjacent to a vertex in $C_1(1)$.) 

Recall that, since $G$ is triangle-free, (1) each ``$B$" set is independent, and (2) no vertex in $B_{12}$ (resp. $B_{13}$) is adjacent to a vertex in $B_2$ (resp. $B_3$) or a vertex in $B_{13}$ (resp. $B_{12}$). Furthermore, for each $v\in B_2$ and $u\in B_{13}$, there exists no $w\in N_{C_1}(u)\cap N_{C_1}(v)$; otherwise, since $G$ is triangle-free, $w\in C_1(\emptyset)$, a contradiction. Hence, $N_G(u)\cap N_G(v)=\emptyset$ as $N_S(u)\cap N_S(v)=\emptyset$ by definition. Thus, (3) every $v\in B_2$ is adjacent to every $u\in B_{13}$; otherwise, $d_G(u,v)\geq3$, contradicting $\diam(G)=2$. Similarly, (4) every vertex in $B_3$ is adjacent to every vertex in $B_{12}$. Finally, for each $u\in B_2$ and $v\in B_3$, there exists no $w\in N_{C_1}(u)\cap N_{C_1}(v)$; otherwise, $w\in C_1(1)$ and $\{u,v\}\subseteq N_{C_1}(C_1(1))$, contradicting the definitions of $B_2$ and $B_3$. Hence, $N_G(u)\cap N_G(v)=\emptyset$ as $N_S(u)\cap N_S(v)=\emptyset$ by definition. Thus, (5) every $u\in B_2$ is adjacent to every $v\in B_3$; otherwise, $d_G(u,v)\geq3$, contradicting $\diam(G)=2$. 

Now let $L_{x_2}:=B_2\cup B_{12}$ and $L_{x_3}:=B_3\cup B_{13}$\aside{$L_{x_2},L_{x_3}$}. Note that $H[L_{x_2}\cup L_{x_3}]$ is bipartite with parts $L_{x_2}$ and $L_{x_3}$, by (1-2). And, in the card, we are unable to distinguish between $B_2$ and $B_{12}$ in $L_{x_2}$ or $B_3$ and $B_{13}$ in $L_{x_3}$. So, we consider the following cases.  

\ul{\textit{Case 1: $B_{12}$ and $B_{13}$ are both nonempty in $G$.}} Now, in $H$, there must exist vertices $L_2\subseteq L_{x_2}$ that are not adjacent to every vertex in $L_{x_3}$, and vertices $L_3\subseteq L_{x_3}$ that are not adjacent to every vertex in $L_{x_2}$, by (1-5). This means $L_2$ and $L_3$ are precisely $B_{12}$ and $B_{13}$, respectively. Further, $B_2=L_{x_2}-B_{12}$ and $B_3=L_{x_3}-B_{13}$. 

\ul{\textit{Case 2: $B_{12}$ and $B_{13}$ are both empty in $G$.}} Now it must be that $d_{C_1}(x_1)=|C_1(1)|$. Note that we can calculate the value of $d_{C_1}(x_1)$ as follows: $d_{C_1}(x_1)=d_G(x_1)-|\cup_{i\neq1}C_i|$, where $d_G(x_1)$ is given by Lemma~\ref{deletedvertex}. So, in $H$, after identifying the vertices of $C_1(1)$, if $|C_1(1)|=d_{C_1}(x_1)$, then the vertices in $C_1(1)$ are the only neighbors of $x_1$ in $C_1$. This means $B_{12}=B_{13}=\emptyset$, and that $B_2=L_{x_2}$ and $B_3=L_{x_3}$. 

\ul{\textit{Case 3: Neither Case 1 nor Case 2 is true.}} Let $B_{23}$ be defined analogously to $B_{12}$ and $B_{13}$. Note that by Pigeonhole Principle, at least two of the sets $B_{12}, B_{23}$, and $B_{13}$ are nonempty, or at least two of them are empty. Let $i$ be the index shared by those two sets. If $i=1$, then we are done by Case 1 or Case 2. Otherwise, there exists a card $H'$ that deletes $x_i$ and whose corresponding $B_{ij}$ and $B_{ik}$ sets, where $\{j,k\}=\{1,2,3\}-\{i\}$, are either both empty or both nonempty. Now we can repeat the above arguments for $H'$ (where the roles of $H$ and $H'$, and those of $x_1$ and $x_i$ are interchanged).  
\end{proof}

\begin{proof}[\textbf{Proof of Theorem~\ref{diam3}}]
Recall that we only need to show that this class of graphs is weakly reconstructible. Let $G$ be a triangle-free graph in $\mathcal{G}_3$ with $\kappa(G)\geq3$, let $S$ be a cut set of $G$ with $|S|=\kappa(G)$, and let $C_1,\dots,C_p$ be the components of $G-S$. Since $\kappa(G)\geq3$, the set $S$ has at least 3 vertices, say $S=\{x_1,x_2,\dots,x_k\}$ where $k\geq3$. Moreover, since $\diam(\overline{G})=3$, there exist vertices $u,v\in V(G)$ such that $d_{\overline{G}}(u,v)=3$. This means $d_G(u,v)=1$; that is, $uv\in E(G)$. Observe that $d_{\overline{G}}(x,y)\leq2$ for every $x,y\in G-S$. Indeed, if $x$ and $y$ are in the same component $C_i$ of $G-S$ for some $i\in[p]$, then pick $z\in C_j$ for some $j\neq i$. Now, $x,y\in N_{\overline{G}}(z)$ which implies $d_{\overline{G}}(x,y)\leq2$. If, otherwise, $x$ and $y$ are in components $C_i$ and $C_j$ of $G-S$ for some distinct $i,j\in[p]$, respectively, then $xy\in E(\overline{G})$ and $d_{\overline{G}}(x,y)=1$. Therefore, either $v=x_i$ and $u\in G-S$, or $v=x_i$ and $u=x_j$ for some $i,j\in[k]$.

\ul{\textit{Case 1: $S$ is independent.}} This implies $d_{\overline{G}}(x_i,x_j)=1$ for every $i,j\in[k]$. Thus, $u\in G-S$ and $v=x_i$ for some $i\in[k]$. By symmetry, assume $i=1$. Observe that $G-S$ must contain a nontrivial component. Otherwise, $G$ is a complete bipartite graph and is reconstructible by Theorem~\ref{bipartite-thm}. By symmetry, let $C_1$ be a nontrivial component of $G-S$. We show that $u\in C_1(S)$. First, note that $u$ must be adjacent to every vertex in $S$; otherwise, there exists $x_j$ ($j\neq 1$) in $S$ such that $ux_j\notin E(G)$, which implies $ux_j\in E(\overline{G})$. Since $x_1x_j\in E(\overline{G})$, this means $d_{\overline{G}}(x_1,u)=2$, a contradiction. So, $u\in C_t(S)$ for some $t\in[p]$. Since $G$ is triangle-free and $C_1$ is nontrivial, there exists $x\in C_1$ which is not adjacent to $x_1$. So, if $t\neq1$, then $x_1,u\in N_{\overline{G}}(x)$ and $d_{\overline{G}}(x_1,u)=2$, a contradiction. Thus, $u\in C_1(S)$. By a similar argument, $C_q$ is trivial for each $q\neq 1$. Indeed, if $C_q$ is nontrivial for some $q\neq1$, then there exists $x\in C_q$ which is not adjacent to $x_1$ implying that $d_{\overline{G}}(x_1,u)=2$ (through $x$), a contradiction. So, $C_1$ is the only nontrivial component of $G-S$. 

\begin{figure}
\centering
\begin{tikzpicture}[scale=0.8, every node/.style={scale=0.8}]
\draw[thick, rounded corners] (8,1) rectangle (11.2,-0.5) (8,-0.5) rectangle (11.2,-2) (8,-2) rectangle (11.2,-3.5) (8,-3.5) rectangle (11.2,-5) (8,-5) rectangle (11.2,-6.5) (8,-7.8) rectangle (11.2,-9.3);
\draw[thick, decorate, decoration={calligraphic brace, amplitude=3mm}] (11.4,1) -- (11.4,-10.2);
\draw[thick] (12.2,-4.6) node[scale=1.5] {$C_1$};
\draw[thick] (9.6,-6.8) node[fill=black, shape=circle, scale=0.4] {} (9.6,-7.1) node[fill=black, shape=circle, scale=0.4] {} (9.6,-7.4) node[fill=black, shape=circle, scale=0.4] {};
\draw[thick] (9.6,-9.6) node[fill=black, shape=circle, scale=0.4] {} (9.6,-9.9) node[fill=black, shape=circle, scale=0.4] {} (9.6,-10.2) node[fill=black, shape=circle, scale=0.4] {};
\tkzDefPoint(0:1){C_2}
\tkzDefShiftPoint[C_2](-90:2){C_3}
\tkzDefShiftPoint[C_3](-90:2){dot1}
\tkzDefShiftPoint[dot1](-90:1){dot2}
\tkzDefShiftPoint[dot2](-90:1){dot3}
\tkzDefShiftPoint[dot3](-90:2){C_p}
\draw[thick, rounded corners] (4,-1) rectangle (5.5,-7);
\tkzDefShiftPoint[C_3](0:3.75){x_1}
\tkzDefShiftPoint[x_1](-90:1){x_2}
\tkzDefShiftPoint[x_2](-90:1){dot4}
\tkzDefShiftPoint[dot4](-90:0.5){dot5}
\tkzDefShiftPoint[dot5](-90:0.5){dot6}
\tkzDefShiftPoint[dot6](-90:1){x_k}
\draw[thick] (C_2) -- (x_1) -- (C_3) (x_2) -- (C_p) -- (x_1) (C_2) -- (x_2) -- (C_3) (C_2) -- (x_k) -- (C_3) (x_k) -- (C_p);
\draw[thick] (8,-1.25) -- (x_1) -- (8,-2.75) (8,-4.25) -- (x_1) -- (8,-5.75) (8,-8.55) -- (x_1) (8,-4.25) -- (x_2) -- (8,-1.25) (x_2) -- (8,-8.55) (8,-1.25) -- (x_k);
\tkzDrawPoints[scale=2, fill=white, thick](C_2,C_3,C_p)
\tkzDrawPoints[fill=black, thick](dot1,dot2,dot3,dot4,dot5,dot6)
\tkzLabelPoints[scale=1.5, left](C_2,C_3,C_p)
\tkzDrawPoints[scale=2, fill=white, thick](x_1,x_2,x_k)
\tkzLabelPoints[scale=1.2,above=0.1cm](x_1)
\tkzLabelPoints[scale=1.2,below=0.2cm](x_2)
\tkzLabelPoints[scale=1.2,below=0.1cm](x_k)
\draw[thick] (9.6,0.25) node[scale=1.5] {$C_1(\emptyset)$} (9.6,-1.25) node[scale=1.5] {$C_1(S)$} (9.6,-2.75) node[scale=1.5] {$C_1(1)$} (9.6,-4.25) node[scale=1.5] {$C_1(\{1,2\})$} (9.6,-5.75) node[scale=1.5] {$C_1(\{1,3\})$} (9.6,-8.55) node[scale=1.5] {$C_1(\{1,2,3\})$};
\end{tikzpicture}
\captionsetup{skip=30pt} 
\caption{The figure shows the structure of $G$ in Case 1 of Theorem~\ref{diam3}.}
\label{fig-diam3}
\end{figure}
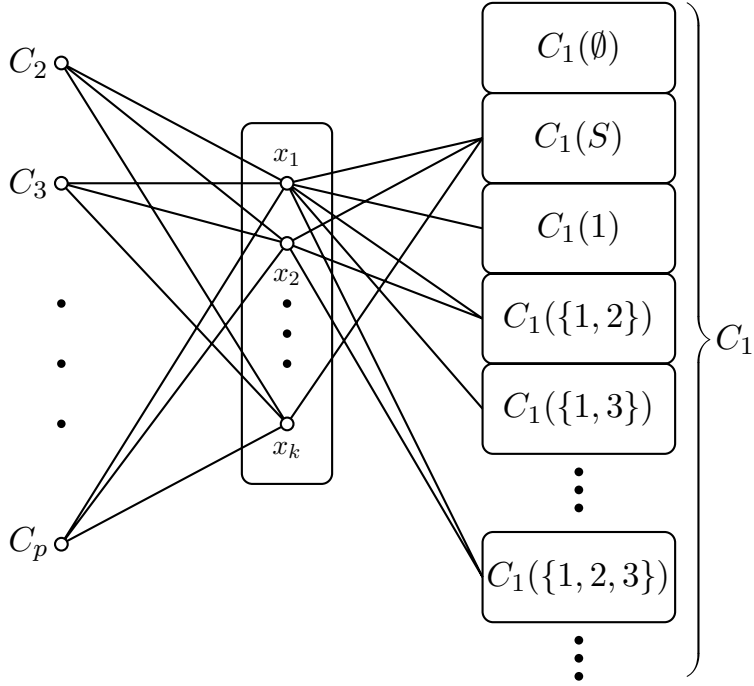

Note that $C_1(\emptyset)\neq\emptyset$. To see this, recall that $C_1(S)\neq\emptyset$ and is an independent set, $C_1$ is a nontrivial component, and $G$ is triangle-free. Therefore, the neighbors in $C_1$ of vertices in $C_1(S)$ can only be in $C_1(\emptyset)$, i.e., $C_1(\emptyset)\neq\emptyset$. Further, every nonempty class in $C_1$ (except for $C_1(\emptyset)$) contains the index 1 (see Figure~\ref{fig-diam3}); otherwise, as before, there exists a vertex $x$ not adjacent to $x_1$ and $x_1,u\in N_{\overline{G}}(x)$ contradicting $d_{\overline{G}}(x_1,u)=3$. Finally, $u$ must be adjacent to every vertex in $C_1(\emptyset)$; otherwise, there exists $x\in C_1(\emptyset)$ not adjacent to $u$, so $x_1,u\in N_{\overline{G}}(x)$ contradicting $d_{\overline{G}}(x_1,u)=3$. Since $G$ is triangle-free, this implies $C_1(\emptyset)$ is independent. Observe that $G$ is now bipartite as follows: Let every class of $C_1$ except $C_1(\emptyset)$ be in one part along with the trivial components of $G-S$, and let $C_1(\emptyset)$ and $S$ be in the other part. It is easy to check that this forms a bipartition of $G$. Thus, $G$ is reconstructible by Theorem~\ref{bipartite-thm}.

\ul{\textit{Case 2: $S$ contains an edge.}} Assume first that $v=x_i$ for some $i\in[k]$ and $u\in G-S$. Let $C_j$ be the component containing $u$ with $j\in[p]$. Since $S$ contains an edge and $G$ is triangle-free, every component of $G-S$ is nontrivial. Pick a component $C_t$ with $t\neq j$. Since $C_t$ is nontrivial, there exists $x\in C_t$ that is not adjacent to $x_i$. Now $x_i,u\in N_{\overline{G}}(x)$ contradicting $d_{\overline{G}}(x_i,u)=3$. Thus, we may assume $v=x_i$ and $u=x_j$ for some $i,j\in[k]$. By symmetry, assume $i=1$ and $j=2$. Since $x_1x_2\in E(G)$ and $G$ is triangle-free, no vertex is adjacent to both $x_1$ and $x_2$. Moreover, every vertex is adjacent to at least one of $x_1$ and $x_2$. Indeed, if some vertex $x$ is nonadjacent to both $x_1$ and $x_2$, then $x_1,x_2\in N_{\overline{G}}(x)$ contradicting $d_{\overline{G}}(x_1,x_2)=3$. Observe that $G$ is again bipartite as follows: Let $N_G(x_1)$ be one part and $N_G(x_2)$ be the other part. Thus, $G$ is reconstructible by Theorem~\ref{bipartite-thm}.
\end{proof}

Note that in light of Theorem \ref{thmA}, Theorem \ref{conn1diam3}  is not needed in order to prove the Reconstruction Conjecture. However, we will refer to this theorem when we consider edge reconstruction in the next section.

\begin{proof}[\textbf{Proof of Theorem~\ref{conn1diam3}}]
Recall that we only need to show that this class is weakly reconstructible. Let $G$ be a a triangle-free graph in $\mathcal{G}_3$ with $\kappa(G)=1$, let $x$ be a cut vertex of $G$, and let $C_1,\dots,C_p$ be the components of $G-x$.

\begin{claim}
\label{one-comp}
$G-x$ has exactly one nontrivial component, say $C_1$.
\end{claim}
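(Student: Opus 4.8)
The plan is to mimic the diameter-$3$ arguments used above for $\mathcal{G}_3$: the hypothesis $\diam(\overline{G})=3$ will force $G$ to be bipartite with a very specific pair of parts, and the claim will then be almost immediate. Note first that $x$ being a cut vertex gives $p\geq 2$, which is the only place that hypothesis is used.

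First I would locate a pair of vertices realizing $\diam(\overline{G})=3$ and argue it must involve the cut vertex $x$. If $a,b\in V(G-x)$ lie in distinct components of $G-x$, then $ab\in E(\overline{G})$; if they lie in the same component, then (since $p\geq 2$) there is a vertex $c$ in another component, and $c$ is adjacent in $\overline{G}$ to both $a$ and $b$. Either way $d_{\overline{G}}(a,b)\le 2$, so a distance-$3$ pair must be $\{u,x\}$ for some $u\in V(G-x)$. From $d_{\overline{G}}(u,x)=3$ I get $ux\in E(G)$ and that $u$ and $x$ have no common neighbour in $\overline{G}$, i.e.\ every vertex other than $u,x$ is adjacent in $G$ to $u$ or to $x$. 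Hence $V(G)=N_G(u)\cup N_G(x)$; since $G$ is triangle-free and $ux\in E(G)$, these two sets are disjoint and each is independent, so $G$ is bipartite with parts $A:=N_G(x)\ni u$ and $B:=N_G(u)\ni x$.

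With the bipartition in hand the claim follows quickly. I would first note $B\neq\{x\}$, for otherwise $G$ would be a star and $\diam(G)=2$; in particular $u$ has a neighbour in $G-x$, so the component of $G-x$ containing $u$ is nontrivial. Now take any nontrivial component $C$ of $G-x$: it contains an edge, and since every edge of $G$ has an endpoint in $B$, it contains some $w\in B=N_G(u)$; then $uw\in E(G)$, so $C$ is exactly the component of $G-x$ containing $u$. Thus $G-x$ has at most one nontrivial component. For the reverse direction, if every component of $G-x$ were trivial then every vertex of $G-x$ would have $x$ as its only neighbour, making $G$ a star and contradicting $\diam(G)=3$; so there is at least one nontrivial component, and we may call it $C_1$.

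I do not expect a genuine obstacle here: the only substantive step is extracting the bipartition from $\diam(\overline{G})=3$ (the same maneuver as in the proof of Theorem~\ref{diam3}), after which triangle-freeness and connectedness do all the work. The one place to be careful is handling the degenerate ``star'' possibilities so as to conclude \emph{exactly} one nontrivial component rather than merely at most one.
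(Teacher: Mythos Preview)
Your argument is correct, but it takes a considerably longer route than the paper. The paper proves Claim~\ref{one-comp} using only $\diam(G)=3$ and triangle-freeness, never touching $\overline{G}$: if two components $C_1,C_2$ of $G-x$ were nontrivial, then each contains an edge, and since $G$ is triangle-free each $C_i$ contains a vertex $v_i$ nonadjacent to $x$; any $v_1$--$v_2$ path must pass through $x$, so $d_G(v_1,v_2)\ge 4$, contradicting $\diam(G)=3$. Together with the ``no nontrivial component $\Rightarrow$ star'' observation, that is the whole proof.

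What you do instead is invoke $\diam(\overline{G})=3$ to pin down a vertex $u$ with $V(G)=N_G(u)\cup N_G(x)$, obtain the bipartition, and then read off the claim. This works, and it has the side benefit that you have essentially established Claim~\ref{bipartite-C1} already (the paper proves that separately, by the same complement-diameter maneuver you use here). So your approach effectively merges the two claims at the cost of front-loading the harder step; the paper keeps Claim~\ref{one-comp} to a three-line argument and postpones the $\overline{G}$ analysis to where it is actually needed.
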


\begin{proof}[Proof of Claim~\ref{one-comp}]
If $G-x$ has no nontrivial components, then $G$ is a star contradicting $\diam(G)=~3$. Suppose instead that $C_1$ and $C_2$ are two nontrivial components of $G-x$. Since $G$ is triangle-free, $C_i$ contains at least one vertex, $v_i$, that is not adjacent to $x$ for each $i\in\{1,2\}$. Now $d_G(v_1,v_2)\geq4$, a contradiction. Hence, $G-x$ has exactly one nontrivial component, $C_1$, as desired.
\end{proof}

Let $C_1^x(\emptyset):=C_1(\emptyset)$ with respect to cut vertex $x$.\aside{$C_1^x$}

\begin{claim}
\label{bipartite-C1}
$C_1$ is bipartite with parts $C_1^x(\emptyset)$ and $C_1(x)$. Furthermore, $G$ is bipartite with parts $C_1^x(\emptyset)\cup \{x\}$ and $C_1(x)\cup T_x$, where $T_x$ denotes the set of all vertices in the trivial components of $G-x$.
\end{claim}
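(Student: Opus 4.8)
The plan is to prove both statements at once by showing first that the two classes $C_1(x)$ and $C_1^x(\emptyset)$, which partition $V(C_1)$ since the cut set is the single vertex $x$, are each independent sets — this gives the first statement immediately — and then checking that moving $x$ to the $C_1^x(\emptyset)$-side and the trivial-component vertices $T_x$ to the $C_1(x)$-side preserves independence, which gives the second statement. That $C_1(x)$ is independent is free: all of its vertices have the common neighbour $x$, and $G$ is triangle-free.

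The heart of the argument, and the step I expect to be the main obstacle, is showing that $C_1^x(\emptyset)$ is independent; this is where the hypothesis $\diam(\overline{G})=3$ is genuinely used. I would argue by contradiction: suppose $ab\in E(G)$ with $a,b\in C_1^x(\emptyset)$, and show this forces $\diam(\overline{G})\le 2$. The ingredients are: (i) by Claim~\ref{one-comp}, $x$ being a cut vertex forces $T_x\ne\emptyset$, and each vertex of $T_x$ has $x$ as its only neighbour in $G$, hence is adjacent in $\overline{G}$ to every other vertex except $x$; (ii) any two vertices of $V(G)\setminus\{x\}$ lie at distance at most $2$ in $\overline{G}$ — if they lie in different components of $G-x$ they are already $\overline{G}$-adjacent, and if they both lie in $C_1$ then any $t\in T_x$ is a common $\overline{G}$-neighbour; (iii) every pair $\{x,z\}$ with $z\ne x$ also lies at distance at most $2$ in $\overline{G}$ — this is clear if $z\in C_1^x(\emptyset)$ (then $xz\in E(\overline{G})$), while if $z\in T_x$ then $a$ is nonadjacent in $G$ to both $x$ and $z$ and hence is a common $\overline{G}$-neighbour of $x$ and $z$, and if $z\in C_1(x)$ then triangle-freeness applied to the edge $ab$ shows $z$ is nonadjacent in $G$ to at least one of $a,b$, and that vertex serves as a common $\overline{G}$-neighbour of $x$ and $z$. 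Combining (ii) and (iii) yields $\diam(\overline{G})\le 2$, contradicting $\diam(\overline{G})=3$. (If $C_1^x(\emptyset)=\emptyset$ there is nothing to prove here; in fact that case cannot occur, since it would make $G$ a star, contradicting $\diam(G)=3$.)

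Finally, with $C_1(x)$ and $C_1^x(\emptyset)$ both independent, $C_1$ is bipartite with these parts. For $G$, set $P_1:=C_1^x(\emptyset)\cup\{x\}$ and $P_2:=C_1(x)\cup T_x$; these partition $V(G)=\{x\}\cup V(C_1)\cup T_x$. Here $P_1$ is independent because $x$ has no neighbour in $C_1^x(\emptyset)$ by definition, and $P_2$ is independent because $C_1(x)$ is independent, the vertices of $T_x$ have their unique neighbour at $x\notin P_2$, and there are no edges of $G$ between $T_x$ and $C_1(x)$ (they lie in different components of $G-x$). A partition of $V(G)$ into two independent sets is exactly a bipartition, so $G$ is bipartite with parts $P_1$ and $P_2$, as claimed. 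The only delicate point in this last paragraph is bookkeeping the edge set of $G$ correctly (edges within $C_1$, plus edges from $x$ to $C_1(x)\cup T_x$), but no new idea beyond triangle-freeness and the component structure is needed.
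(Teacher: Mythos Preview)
Your argument is correct and uses the same ingredients as the paper's proof, but the framing differs slightly. The paper argues directly: it fixes a pair $u,v$ with $d_{\overline{G}}(u,v)=3$, shows (by the same component observations you make in~(ii)) that one of them must be $x$ and the other some $v\in C_1(x)$, and then proves that this particular $v$ is adjacent in $G$ to every vertex of $C_1^x(\emptyset)$, whence $C_1^x(\emptyset)$ is independent by triangle-freeness. You instead run the contrapositive: assume an edge $ab$ inside $C_1^x(\emptyset)$ and verify exhaustively that every pair of vertices of $G$ has $\overline{G}$-distance at most $2$. Both routes rely on exactly the same facts (Claim~\ref{one-comp} gives $T_x\ne\emptyset$; any $t\in T_x$ serves as a universal $\overline{G}$-neighbour inside $C_1$; triangle-freeness handles the $z\in C_1(x)$ case), so there is no genuine methodological gap between them. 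The paper's version has the small bonus of exhibiting a concrete vertex $v\in C_1(x)$ with $N_G(v)=C_1^x(\emptyset)\cup\{x\}$, but this extra structure is not used in the remaining cases of the theorem, so your version is entirely adequate.
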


\begin{proof}[Proof of Claim~\ref{bipartite-C1}]
Since $G$ is triangle-free, $C_1(x)$ is independent. Also, note that $C_1^x(\emptyset)\neq\emptyset$ because $C_1$ is nontrivial. Since $\diam(\overline{G})=3$, there exist $u,v\in V(G)$ such that $d_{\overline{G}}(u,v)=3$. If $u,v\in C_i$ for some $i\in[p]$, then pick $w\in C_j$ for $j\neq i$. Now $u,v\in N_{\overline{G}}(w)$ and $d_{\overline{G}}(u,v)\leq2$, a contradiction. Similarly, if $u\in C_i$ and $v\in C_j$ for some $i\neq j$, then $uv\in E(\overline{G})$ and $d_{\overline{G}}(u,v)=1$, a contradiction. So, we assume that $u=x$ and $v\in G-x$. Observe that $v\notin C_1^x(\emptyset)$ and $v\notin T_x$; otherwise, $xv\in E(\overline{G})$ and $x,v\in N_{\overline{G}}(w)$ for some $w\in C_1^x(\emptyset)$, respectively. In both cases, $d_{\overline{G}}(x,v)\leq2$, a contradiction. It follows that $v\in C_1(x)$. Furthermore, if there exists $w\in C_1^x(\emptyset)$ such that $vw\notin E(G)$, then $x,v\in N_{\overline{G}}(w)$ and $d_{\overline{G}}(x,v)=2$, a contradiction. Hence, $C_1^x(\emptyset)\subseteq N_G(v)$ which implies $C_1^x(\emptyset)$ is independent since $G$ is triangle-free. Now observe that $C_1$ is bipartite with parts $C_1^x(\emptyset)$ and $C_1(x)$. Further, $G$ is bipartite with parts $C_1^x(\emptyset)\cup \{x\}=N_G(v)$ and $C_1(x)\cup T_x=N_G(x)$, as desired; see Figure~\ref{fig-conn1diam3}. 
\end{proof}

\begin{figure}
\centering
\begin{tikzpicture}[scale=0.8, every node/.style={scale=0.8}]
\draw[thick, rounded corners] (6,-2) rectangle (9,-4) (6,-4) rectangle (9,-6);
\draw[thick, decorate, decoration={calligraphic brace, mirror, amplitude=3mm}] (9.2,-6) -- (9.2,-2);
\draw[thick] (10,-4) node[scale=1.5] {$C_1$};
\tkzDefPoint(0:1){C_2}
\tkzDefShiftPoint[C_2](-90:2){C_3}
\tkzDefShiftPoint[C_3](-90:2){dot1}
\tkzDefShiftPoint[dot1](-90:1){dot2}
\tkzDefShiftPoint[dot2](-90:1){dot3}
\tkzDefShiftPoint[dot3](-90:2){C_p}
\tkzDefShiftPoint[dot1](0:3){x}
\draw[thick] (C_2) -- (x) -- (C_3) (C_p) -- (x) -- (6,-3);
\tkzDrawPoints[scale=2, fill=white, thick](C_2,C_3,C_p)
\tkzDrawPoints[fill=black, thick](dot1,dot2,dot3)
\tkzLabelPoints[scale=1.5, left](C_2,C_3,C_p)
\tkzDrawPoints[scale=2, fill=white, thick](x)
\tkzLabelPoints[scale=1.2](x)
\draw[thick] (7.5,-3) node[scale=1.5] {$C_1(x)$};
\draw[thick] (7.5,-5) node[scale=1.5] {$C_1^x(\emptyset)$};
\end{tikzpicture}
\captionsetup{skip=30pt} 
\caption{The figure shows the structure of $G$ in Theorem~\ref{conn1diam3}.}
\label{fig-conn1diam3}
\end{figure} 

We now split the rest of the proof into three cases. By slight abuse of notation, we refer to the nontrivial component for any cut vertex as $C_1$.

\ul{\textit{Case 1: There exists a cut vertex $x$ of $G$ such that $|C_1^x(\emptyset)|\ne |C_1(x)|$.}} By Claims~\ref{one-comp} and \ref{bipartite-C1}, this means that there exists a card $H$ of $G$ such that $H$ is disconnected and the only nontrivial component $C_1$ of $H$ is bipartite with unequal parts. Let $H=G-y$ for some cut vertex $y\in V(G)$. By the above claims, $N_G(y)$ consists of all trivial components of $H$, as well as, all vertices in one part of $C_1$. By Lemma~\ref{deletedvertex}, we can recover $d_G(y)$. Then, since the parts of $C_1$ are unequal, we can identify $N_{C_1}(y)$. Thus, $G$ is reconstructible.

\ul{\textit{Case 2: $|C_1^{x'}(\emptyset)|=|C_1(x')|$ for every cut vertex $x'$ of $G$ and there exists a cut vertex $x$ of $G$ such that $G-x$ has $k$ trivial components for some integer $k\ge 2$.}} We claim that $x$ is the only cut vertex of $G$. Indeed, assume some $x'\neq x$ is another cut vertex of $G$ where $G-x'$ has $k'\geq1$ trivial components. Note that $x'\in C_1(x)$ since $G-v$ is connected for every $v\in C_1^x(\emptyset)\cup T_x$. This means $N_G(x')\subseteq C_1^x(\emptyset)\cup \{x\}$, i.e., $d_G(x')\le |C_1^x(\emptyset)|+1$. Further, $d_G(x')=|C_1(x')|+k'$ and $|C_1^{x'}(\emptyset)|=|C_1(x')|=d_G(x')-k'$. Since $|V(G)|=|C_1^{x'}(\emptyset)|+|C_1(x')|+1+k'=|C_1^x(\emptyset)|+|C_1(x)|+1+k$, we have $2(d_G(x')-k')+1+k'=2|C_1^x(\emptyset)|+1+k$ which implies $d_G(x')=|C_1^x(\emptyset)|+(k+k')/2\ge |C_1^x(\emptyset)|+(2+1)/2>|C_1^x(\emptyset)|+1$, a contradiction. Thus, $x$ is the only cut vertex of $G$.

We can identify that $G$ has a unique cut vertex by checking that every card in the deck is connected except for one. Then we pick a card $H=G-z$ for some $z\in V(G)$ with $d_G(z)=1$ (again, $d_G(z)$ is reconstructible by Lemma~\ref{deletedvertex}). Note that the unique neighbor of any degree 1 vertex in $G$ is a cut vertex, and deleting a degree 1 vertex in $G$ does not create a new cut vertex in the card. So, the unique cut vertex of $G$ is still unique in $G-z$ and is the neighbor of $z$. Thus, $G$ is reconstructible.

\ul{\textit{Case 3: $|C_1^x(\emptyset)|=|C_1(x)|$ and $G-x$ has exactly one trivial component for every cut vertex $x$ of $G$.}} Note that $|V(G)|$ is easily reconstructible from the cards of $G$ since each card deletes a single vertex. Since $G-x$ contains a single trivial component, it follows that $|C_1^x(\emptyset)|=|C_1(x)|=(|V(G)|-2)/2$ for every cut vertex $x$ of $G$. By Claim~\ref{bipartite-C1}, $G$ is bipartite with parts $C_1^x(\emptyset)\cup \{x\}$ and $C_1(x)\cup T_x$, where $|T_x|=1$, which implies $|C_1^x(\emptyset)\cup \{x\}|=|C_1(x)\cup T_x|=(|V(G)|-2)/2+1=|V(G)|/2$. Further, $|C_1^x(\emptyset)\cup \{x\}|=|C_1(x)\cup T_x|\geq2$ since $C_1^x(\emptyset)\cup \{x\}$ contains $x$ and at least one non-neighbor of $x$. 

Observe that for every cut vertex $x$ of $G$, the trivial component of $G-x$ has degree one in $G$. So, there exists a connected bipartite card $H=G-y$ for some $y\in V(G)$ such that $d_G(y)=1$ (again, $d_G(y)$ is reconstructible by Lemma~\ref{deletedvertex}). Note that $y$ is the  unique trivial component of $G-z$ for some cut vertex $z$ of $G$; in particular, $z$ is the unique neighbor of $y$ in $G$. Since $H$ is connected, it can be uniquely bipartitioned into parts $X$ and $Y$ with $|X|>|Y|$ and $|X|-|Y|=1$, by the above arguments. Thus, $y$ has no neighbors in $Y$ and the unique neighbor $z$ of $y$ is a vertex in $X$ such that $N_H(z)=Y$. If there exist distinct vertices $x_1$ and $x_2$ in $X$ such that $N_H(x_1)=N_H(x_2)=Y$, then we pick $z$ arbitrarily between $x_1$ and $x_2$ since $H+x_1y$ and $H+x_2y$ are isomorphic. Thus, $G$ is reconstructible.
\end{proof}

\section{Edge Reconstruction: Proofs of Theorems~\ref{edgecon_diam2} and \ref{edgecon_diam3}}\label{sec:edge-reconstruction}

In this section, we consider edge reconstruction for triangle-free graphs in $\mathcal{G}_2\cup\mathcal{G}_3$. The proof of Theorem~\ref{edgecon_diam2} uses a systematic approach to try and identify the endpoints of the deleted edge. On the other hand, the proof of Theorem~\ref{edgecon_diam3} follows easily from previous results. 


\begin{proof}[\textbf{Proof of Theorem~\ref{edgecon_diam2}}]
Note that this class of graphs is edge-recognizable by Lemmas~\ref{Kelly} and \ref{diam-recog} and Theorem~\ref{greenwell}. So, we only need to show it is weakly edge-reconstructible. Let $G$ be a triangle-free graph in $\mathcal{G}_2$. For every $uv\in E(G)$, if $d_{G-uv}(x,y) \geq 3$ for some $x,y\in V(G-uv)$, then $x=u$ and $y \in N_G[v]$, or $x=v$ and $y \in N_G[u]$; in particular, $d_{G-uv}(u,v) \geq 3$ since $G$ is triangle-free. To see this, note that each pair $x,y\in V(G-uv)$ with $\{x,y\}\neq \{u,v\}$ and $d_{G-uv}(x,y)\geq3$ must be nonadjacent in $G$ and must use the edge $uv$ in $G$ to satisfy $d_G(x,y)=2$. 

Pick an edge-card $H=G-uv$ for some $uv\in E(G)$ and let \aside{$P_H$}$P_H:=\{\{x,y\}: x,y\in V(H)\text{ and }\\ d_H(x,y)\ge3\}$. Assume first that $P_H=\{\{x,y\}\}$, i.e., $|P_H|=1$. Now, $\{x,y\}=\{u,v\}$ and $G=H+xy$ since $\{u,v\}\in P_H$. Assume instead that $|P_H|=k\ge3$. If $\{x,y\}, \{x,z\}\in P_H$, then $x=u$ or $x=v$; otherwise $\{y,z\}=\{u,v\}$ and $x\in N_G(u)\cap N_G(v)$, i.e., $x$, $y$, and $z$ form a triangle, a contradiction. Moreover, since $\{u,v\}\in P_H$, each pair in $P_H$ contains $u$ or $v$, and $k\ge3$, there exist at least two pairs in $P_H$ with a common vertex $x$. So, $x=u$ or $x=v$; say $x=u$. If there exists another vertex $y$ which also appears in more than one pair in $P_H$, then $(x,y)=(u,v)$ and $G=H+xy$. So, suppose $x$ is the only vertex that appears in more than one pair in $P_H$. If $\{y,z\}\in P_H$ with $y\neq x$ and $z\neq x$, then either $y=v$ or $z=v$ (since each pair in $P_H$ contains $u$ or $v$), say $y=v$. Now, $\{x,y\}=\{u,v\}\in P_H$ which means $y$ appears in more than one pair in $P_H$, a contradiction. So, suppose $x$ appears in every pair in $P_H$, i.e., $P_H=\{\{x,y_1\},\{x,y_2\},\dots,\{x,y_k\}\}$. Now, $y_i\in N_G[v]$ for each $i\in[k]$. More precisely, the set $\{y_1,y_2,\dots,y_k,x\}$ forms an induced star in $H$ with center $v=y_i$ for some $i\in[k]$; so, $G=H+xy_i$. Thus, we may assume $|P_H|=2$ for every edge-card $H$.

By symmetry, assume $P_H=\{\{u,v_1\},\{u,v_2\}\}$ where $v\in\{v_1,v_2\}$. Let $w=\{v_1,v_2\}-v$, i.e., $\{v_1,v_2\}=\{v,w\}$. Note that $v_1v_2\in E(H)$ (and therefore, $v_1v_2\in E(G)$). If $d_H(v_1)\neq d_H(v_2)$, then we can identify $v$ since $d_G(v)$ is edge-reconstructible by Lemma~\ref{deletedvertex}. So, assume that $d_H(v_1)=d_H(v_2)$. Now, there exists an edge-card $H'$ which deletes $v_1v_2$. Interchanging the roles of $u$ and $w$ in the above arguments, $P_{H'}=\{(v,w),(u,w)\}$ since $d_H(u,w)\ge3$ (i.e, $v$ is the only common neighbor of $u$ and $w$ in $G$). So, as in $H$, we may assume $d_{H'}(u)=d_{H'}(v)$. Hence, $d_G(u)=d_G(w)=d_G(v)-1$. This defines a bijection $\sigma:E(G) \rightarrow E(G)$ such that, for every $ab \in E(G)$, there exists $bc \in E(G)$ with $\sigma(ab)=bc$, $\sigma(bc)=ab$, and $d_G(a)=d_G(c)=d_G(b)-1$, where $a,b,c\in V(G)$ and $b$ is the only common neighbor of $a$ and $c$ in $G$. Since every edge in $G$ connects vertices whose degrees differ by one and, therefore, are of different parity, $G$ is bipartite. Thus, $G$ is edge-reconstructible by Theorems~\ref{bipartite-thm} and \ref{greenwell}. 
\end{proof}

\begin{proof}[\textbf{Proof of Theorem~\ref{edgecon_diam3}}]
As before, we only need to show that this class of graphs is weakly edge-reconstructible. Let $G$ be a triangle-free graph in $\mathcal{G}_3$. Since $\diam(G)$ is finite, $G$ is connected. If $\kappa(G)=1$, then we are done by Theorems~\ref{greenwell} and \ref{conn1diam3}. If $\kappa(G)=2$, then we are done by Theorems~\ref{tfree-thm} and \ref{greenwell}. Finally, if $\kappa(G)\geq3$, then we are done by Theorems~\ref{diam3} and \ref{greenwell}. Observe that the result remains true even if $|E(G)|<4$ since the only such graph is $P_4$, and $P_4$ is edge-reconstructible as no other graph can have an edge-card isomorphic to $2K_2$.
\end{proof}


\section{Acknowledgments}
This project began at the Graduate Research Workshop in Combinatorics in 2021. We sincerely thank the organizers. Alexander Clifton was supported by the Institute for Basic Science (IBS-R029-C1) and partially supported by NSF award DMS-1945200. Xiaonan Liu was partially supported by NSF award DMS-1856645 and NSF award DMS-1954134.

\bibliographystyle{plainurl}
\footnotesize{
\bibliography{ref}

\begin{thebibliography}{10}

\bibitem{BM12}
J.~Balakumar and S.~Monikandan.
\newblock Reconstruction of bipartite graphs and triangle-free graphs with
  connectivity two.
\newblock {\em Australasian Journal of Combinatorics}, 53:141--150, 2012.

\bibitem{BoHe77}
J.~A. Bondy and R.~L. Hemminger.
\newblock Graph reconstruction---a survey.
\newblock {\em J. Graph Theory}, 1(3):227--268, 1977.

\bibitem{Gre71}
D.~L. Greenwell.
\newblock Reconstructing graphs.
\newblock {\em Proceedings of the American Mathematical Society}, 30:431--433,
  1971.

\bibitem{GMP03}
S.~K. Gupta, Pankaj Mangal, and Vineet Paliwal.
\newblock Some work towards the proof of the reconstruction conjecture.
\newblock {\em Discrete Mathematics}, 272:291--296, 2003.

\bibitem{Har64}
Frank Harary.
\newblock {\em On the reconstruction of a graph from a collection of
  subgraphs}.
\newblock Theory of Graphs and Its Applications (Proceedings of the Symposium
  in Smolenice). Czechoslovak Academy of Sciences, 1964.

\bibitem{Kel42}
Paul~J. Kelly.
\newblock On isometric transformations.
\newblock {\em PhD thesis, University of Wisconsin}, 1942.

\bibitem{Kel57}
Paul~J. Kelly.
\newblock A congruence theorem for trees.
\newblock {\em Pacific Journal of Mathematics}, 7(1):961--968, 1957.

\bibitem{La13}
Josef Lauri.
\newblock The reconstruction problem.
\newblock In {\em Handbook of Graph Theory 2nd Edition}, pages 77--100. 2013.

\bibitem{LaSc03}
Josef Lauri and Raffaele Scapellato.
\newblock {\em Topics in graph automorphisms and reconstruction}, volume~54 of
  {\em London Mathematical Society Student Texts}.
\newblock Cambridge University Press, Cambridge, 2003.

\bibitem{MR09}
S.~Monikandan and S.~Ramachandran.
\newblock Graph reconstruction conjecture: Reductions using complement,
  connectivity and distance.
\newblock {\em Bulletin of the Institute of Combinatorics and its
  Applications}, 56:103--108, 2009.

\bibitem{Ulam60}
S.~M. Ulam.
\newblock {\em A collection of Mathematical Problems}, volume~8 of {\em
  Interscience Tracts in Pure and Applied Mathematics}.
\newblock Interscience, New York, 1960.

\bibitem{We96}
Douglas~B. West.
\newblock {\em Introduction to graph theory}.
\newblock Prentice Hall, Inc., Upper Saddle River, NJ, 1996.

\bibitem{Yang88}
Yongzhi Yang.
\newblock The reconstruction conjecture is true if all 2-connected graphs are
  reconstructible.
\newblock {\em Journal of Graph Theory}, 12(2):237--243, 1988.

\end{thebibliography}
}	 

\end{document}